\newtheorem{theoremA}{Theorem}
\newtheorem{lemmaA}[theoremA]{Lemma}
\newtheorem{propositionA}[theoremA]{Proposition}
\newtheorem{theorem}{Theorem}[section]
\newtheorem{corollary}[theorem]{Corollary}
\newtheorem{lemma}[theorem]{Lemma}
\newtheorem{proposition}[theorem]{Proposition}
\newtheorem{remark}[theorem]{Remark}
\newtheorem{example}[theorem]{Example}
\newcommand{\mel}{\MoveEqLeft}
\numberwithin{equation}{section}
\newcommand\dx{\,dx}
\newcommand\dt{\,dt}
\newcommand{\kabs}[1]{\ensuremath{\vert#1\vert}}
\newcommand{\babs}[1]{\ensuremath{\big\vert#1\big\vert}}
\newcommand{\Babs}[1]{\ensuremath{\Big\vert#1\Big\vert}}
\DeclareMathOperator{\Div}{div}
\def\R{\mathbb R}
\def\eps{\varepsilon}
\def\eu{{\rm Eucl}}
\def\part#1#2{\par\noindent{\underline{\it Part~#1.}}\emph{ #2}\\}
\def\XXint#1#2#3{{\setbox0=\hbox{$#1{#2#3}{\int}$} \vcenter{\vspace{-1pt}\hbox{$#2#3$}}\kern-.5\wd0}}
\newcounter{mt}
\title[Optimal regularity of isoperimetric sets with H\"older densities]{Optimal regularity of isoperimetric sets with H\"older densities}
\author{Lisa Beck}
\address[LB]{Institut f\"ur Mathematik\\
Universit\"at Augsburg\\
 	Universit\"atsstra\ss e 12a\\
 	86159 Augsburg\\
 	Germany}
 	\email{lisa.beck@math.uni-augsburg.de}
\author{Eleonora Cinti}
\address[EC]{Dipartimento di Matematica\\
Universit\`a di Bologna\\
Piazza di Porta San Donato 5\\
Bologna\\
Italy}
\email{eleonora.cinti5@unibo.it}
\author{Christian Seis}
\address[CS]{Institut f\"ur Analysis und Numerik\\
Westf\"alische Wilhelms-Universit\"at M\"unster\\
 Einsteinstra\ss e 62\\
 48149 M\"unster\\
 Germany}
 \email{seis@wwu.de}
\begin{document}
\begin{abstract}
We establish a regularity result for optimal sets of the isoperimetric problem with double density  under  mild {($\alpha$-)}H\"older regularity assumptions on the density functions. Our main Theorem improves some previous results  and allows to reach in any dimension the regularity class~$C^{1,\frac{\alpha}{2-\alpha}}$. {This class is indeed the optimal one for local minimizers of variational functionals with an integrand that depends $\alpha$-Hölder continuous on the minimizer itself, and as such can (the boundary of) the isoperimetric set be locally written (with additional constraint).
}
\end{abstract}

\subjclass{49Q05, 49Q20, 35J93, 35B65.}
\maketitle

\section{Introduction}

In this paper we are concerned with the regularity of isoperimetric sets in $\R^n$ with densities, for arbitrary dimensions $n \geq 2$ and with an emphasis on the situation of  very low regularity assumptions on the density functions. Isoperimetric sets are defined as solutions of the isoperimetric problem in $\R^{n}$ with densities, which can be formulated as follows: 
Given two lower semi-continuous functions $f,\,h \colon \R^{n} \to (0,+\infty)$, the so-called \emph{densities}, and an arbitrary measurable set $E \subset \R^{n}$, we introduce its (weighted) volume $V_f(E)$ via
\begin{equation*}
 V_f(E) \coloneqq \int_E f(x) \dx 
\end{equation*}
and its (weighted) perimeter $P_h(E)$ via 
\begin{equation*}
 P_h(E) \coloneqq \int_{\partial^* E} h(x) \, d\mathcal{H}^{n-1}(x),
\end{equation*}
whenever~$E$ is of locally finite perimeter (and with $\partial^* E$ denoting the reduced boundary of~$E$), while we set $P_h(E) \coloneqq {+}\infty$ otherwise. For some positive number~$m$, we then look for a set of minimal weighted perimeter among all sets of fixed weighted volume~$m$, i.e., we look for minimizers of
\begin{equation*}
 \inf \big\{ P_h(E) \colon E \subset \R^n \text{ with } V_f (E) = m \big\}.
\end{equation*}

The classical isoperimetric problem dating back to ancient Greece corresponds to constant density functions $h=f \equiv 1$, for which the weighted volume and perimeter reduce to the Euclidean volume~$V_\eu$ and Euclidean perimeter~$P_\eu$, and in this case it is well known that the isoperimetric sets relative to a constant~$m$ are precisely all balls of radius $R$ such that the equality
\begin{equation*}
V_\eu(B_R) = \frac{\pi^{n/2}}{\Gamma(1+n/2)} R^n = m
\end{equation*}
is satisfied. 


In the interesting case of non-constant densities, existence of isoperimetric sets is still guaranteed under quite general assumptions on the density functions (see \cite{CP1,DFP,MP,PS1}). These sets are in general not unique, even not in the equivalence class of spatial translates. (Notice, however, that in certain geometries, uniqueness of the isoperimetric set is obtained only \emph{thanks} to the weight.) 
In general, if the weight functions are not globally bounded and continuous, the isoperimetric sets are not necessarily bounded. The complementary, however, is true as proved in \cite[Theorem 1.1]{CP1} and \cite[Theorem B]{PS1}.


In this paper, we focus on the regularity of the isoperimetric sets, or more precisely, on the regularity of their boundaries. (Optimal) regularity has been investigated for many years in dependency on the regularity of the density functions. 

A classical (and optimal) result in this regard for the case of a single density (i.e., $f=h$), under the assumption of quite high regularity, which in particular allows to take advantage of the Euler--Lagrange formulation of the problem, is the following statement (\cite[Proposition~3.5 and Corollary~3.8]{Morgan2003}).

\begin{theoremA}[\cite{Morgan2003}]
\label{prop_densities_high_reg}
Let $f=h$ be of class $C^{k,\alpha}(\R^n,\R^+)$ for some $k \geq 1$ and $\alpha \in (0,1]$. Then the boundary of any isoperimetric set is of class $C^{k+1,\alpha}$, except for a singular set of Hausdorff dimension at most $n-8$. 
\end{theoremA}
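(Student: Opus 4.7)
The plan is to argue in three stages: first establish quasi-minimality of the isoperimetric set, then invoke the classical $\eps$-regularity theory for quasi-minimizers, and finally perform a Schauder bootstrap on the Euler--Lagrange PDE to reach the full $C^{k+1,\alpha}$ conclusion. For the first stage, I would show that any isoperimetric set $E$ is a $(\Lambda,r_0)$-minimizer of the Euclidean perimeter $P_{\eu}$ in Almgren's sense. Since $f=h$ is continuous and strictly positive, on any ball containing $E$ (boundedness of $E$ follows from the results cited above) one has $0<c_1\le f\le c_2<\infty$, so $P_h$ and $P_{\eu}$ are comparable up to multiplicative constants. To compare competitors $F$ with $E$ while preserving the constraint $V_f(F)=m$, one applies the standard volume-fixing construction: fix an auxiliary point $x_0\notin\overline E$ and perturb $E$ near $x_0$ to absorb or release any volume deficit, so that the extra weighted perimeter is controlled linearly by $|F\triangle E|$. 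This yields uniform $(\Lambda,r_0)$-minimality of $E$ for both $P_h$ and $P_{\eu}$ on small balls centered at any point of $\partial E$.

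The second stage is then a direct citation of the Almgren--De Giorgi--Tamanini theory for quasi-minimizers, which produces that $\partial^* E$ is a hypersurface of class $C^{1,1/2}$ and that the singular set $\partial E\setminus\partial^*E$ has Hausdorff dimension at most $n-8$. For the third stage, near any regular point of $\partial^*E$ I would write the boundary as the graph of a function $u\in C^{1,1/2}$ over a small ball in its tangent plane. The first variation of $P_h$ at fixed weighted volume leads to the prescribed weighted mean curvature equation
\begin{equation*}
 \Div\!\left(\frac{\grad u}{\sqrt{1+|\grad u|^2}}\right)+\bigl\langle\grad\log f,\nu\bigr\rangle=\frac{\lambda}{f},
\end{equation*}
with Lagrange multiplier $\lambda$ and $\nu$ the unit normal to the graph. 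This is a quasilinear uniformly elliptic PDE in divergence form whose coefficients depend smoothly on $\grad u$ and on $f$, $\grad f$. A standard Schauder bootstrap then improves the regularity of $u$ by one unit at each step, starting from $C^{1,1/2}\subset C^{1,\alpha}$; the procedure terminates at $u\in C^{k+1,\alpha}$, the ceiling being imposed by the lowest-regularity ingredient $\grad f\in C^{k-1,\alpha}$.

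The main obstacle is the first stage, where uniform $(\Lambda,r_0)$-minimality requires constants independent of the particular boundary point, and thus a careful choice of the auxiliary perturbation point exploiting the positivity and continuity of $f$; once this is in place, the remaining two stages are essentially standard, namely a direct appeal to the Tamanini regularity theorem followed by a classical elliptic iteration.
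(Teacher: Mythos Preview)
The paper does not prove this statement: Theorem~A is quoted verbatim from Morgan \cite{Morgan2003} as background, and no argument is supplied here. There is therefore no ``paper's own proof'' to compare against; your outline is essentially the classical route (and indeed the one Morgan follows): quasi-minimality via a volume-fixing variation, the Almgren--De~Giorgi--Tamanini $\eps$-regularity theorem to obtain an initial $C^{1,\beta}$ reduced boundary together with the dimension bound on the singular set, and then a Schauder bootstrap on the first-variation PDE.

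Two small points are worth tightening. First, the inclusion ``$C^{1,1/2}\subset C^{1,\alpha}$'' is false when $\alpha>1/2$; the bootstrap still works, but the first Schauder step only yields $u\in C^{2,\min\{1/2,\alpha\}}$, and one needs a second application (now with $Du$ Lipschitz) to reach $C^{2,\alpha}$ before iterating up to $C^{k+1,\alpha}$. Second, your appeal to boundedness of $E$ is not quite what the paper cites (it records that isoperimetric sets may be \emph{un}bounded for unbounded densities); fortunately regularity is a local statement, so it suffices to work on a fixed ball intersecting $\partial E$, where the positive continuous density is automatically bounded above and below. With these adjustments your three-stage plan is correct and matches the standard argument.
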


When the densities are assumed to be just H\"older functions, one cannot even write down the associated Euler--Lagrange equation.
This case of poorly regular densities was addressed only recently. 
More specifically, in  Theorem~5.7 of \cite{CP1} a first regularity result for the case of a single density $f=h$, which is H\"older continuous of order $\alpha$, was established in any dimension (with a H\"older exponent depending on $\alpha$ and on the dimension). A second result   improved the above regularity result in the $2$-dimensional case, see Theorem~A in \cite{CP2}. Here is the precise combined statement.


\begin{theoremA}[\cite{CP1,CP2}]\label{CP}
\label{prop_densities_low_reg1}
Let $f=h$ be  of class $C^{0,\alpha}(\R^n,\R^+)$ for some $\alpha \in (0,1]$. Then, if $E$ is an isoperimetric set, we have that $\partial E=\partial^*E$ up to $\mathcal H^{n-1}$-negligible sets, and  $\partial^*E \in C^{1, \alpha/(2n(1-\alpha) + 2\alpha)}$. If $n=2$, we have that $\partial^*E \in C^{1,\alpha/(3-2\alpha)}$.
\end{theoremA}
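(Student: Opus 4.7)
The plan is to convert the weighted isoperimetric condition on $E$ into a local almost-minimality property for the Euclidean perimeter and then to invoke the classical theory of almost-minimizers to obtain $C^{1,\beta}$ regularity of $\partial^* E$ with a quantitative exponent depending on $\alpha$ and on the dimension. The first step is an Almgren-type volume-fixing variation: since $E$ has finite positive weighted perimeter and $h$ is bounded from below, $\partial^* E$ is nonempty, so one can pick a reference point $y_0\in\partial^* E$ at which $E$ admits a measure-theoretic tangent half-space. In a fixed small ball $B_{r_0}(y_0)$, using that $f$ and $h$ are continuous and strictly positive, one constructs a family of compactly supported perturbations $\{F_\tau\}_{|\tau|\leq\tau_0}$ with $V_f(F_\tau)-V_f(E)=\tau$ and $P_h(F_\tau)-P_h(E)\leq c|\tau|$, where the constant $c$ depends only on $y_0$ and $r_0$.

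Next, one fixes a small ball $B_r(x_0)$ disjoint from $B_{r_0}(y_0)$ and takes any competitor $F$ with $F\triangle E\subset B_r(x_0)$. Applying the volume-fixing lemma with $\tau=V_f(E)-V_f(F)$, for which $|\tau|\leq \|f\|_\infty |F\triangle E|\leq Cr^n$, produces $F'$ with $V_f(F')=m$; the minimality $P_h(E)\leq P_h(F')$ combined with the Hölder bounds $|h(x)-h(x_0)|\leq [h]_\alpha r^\alpha$ on $B_r(x_0)$ yields
\begin{equation*}
 P_\eu(E;B_r(x_0))\;\leq\;(1+Cr^\alpha)\,P_\eu(F;B_r(x_0))\;+\;Cr^n.
\end{equation*}
From this quasi-minimality, the usual comparison with balls gives two-sided density estimates for $E$ and its complement near $\partial E$, and these imply the identification $\partial E=\partial^* E$ modulo $\mathcal{H}^{n-1}$-negligible sets.

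The $C^{1,\beta}$ regularity then follows from a Campanato-type iteration of the flatness excess at points of $\partial^* E$: at each scale the excess contracts by a fixed factor modulo perturbative contributions whose sizes are controlled by the two scales $r^\alpha$ (from the weights) and $r^n$ (from the volume fix). Balancing these scales through the dyadic iteration is what produces the concrete exponent $\beta=\alpha/(2n(1-\alpha)+2\alpha)$ in general dimension; the explicit dependence on $n$ reflects the dimensional scaling of the volume correction. In the planar case $n=2$ the boundary is locally a rectifiable Jordan arc, and a direct geometric argument (parametrizing $\partial^* E$ as a graph over a tangent line and comparing weighted arc-lengths with straight chords, while tracking the volume as an oriented plane area) is available in place of the abstract decay; the rebalancing of the two error scales in this planar setting improves the exponent to $\alpha/(3-2\alpha)$. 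The main obstacle throughout is precisely tracking this balance between $r^\alpha$ and $r^n$ in the iteration: both enter with different homogeneities, and it is their joint effect rather than either one alone that fixes the final Hölder exponent.
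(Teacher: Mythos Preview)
The central gap is in your volume-fixing step. For an $\alpha$-H\"older perimeter density with $\alpha<1$ the Lipschitz estimate $P_h(F_\tau)-P_h(E)\le c|\tau|$ is simply false: under any boundary deformation $\Phi_t$ supported in $B_{r_0}(y_0)$ the integrand $h(\Phi_t(x))-h(x)$ is only $O(|t|^\alpha)$ (take for instance $h(x)=1+|x_n|^\alpha$ with a flat piece of boundary at $x_n=0$ and a normal push), so the weighted perimeter varies like $|t|^\alpha\sim|\tau|^\alpha$, not $|\tau|$. What one can actually prove is an $\eps$--$\eps^\beta$ property $P_h(F_\tau)-P_h(E)\le c|\tau|^\beta$ with some $\beta=\beta(\alpha,n)<1$, and establishing this with a usable $\beta$ is precisely the main technical contribution of \cite{CP1}; the paper singles it out explicitly as the crucial ingredient. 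Your sketch assumes away exactly the hard part.

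There is also an internal inconsistency in how you extract the exponent. From your own inequality $P_\eu(E;B_r)\le(1+Cr^\alpha)P_\eu(F;B_r)+Cr^n$ and the density bound $P_\eu(F;B_r)\lesssim r^{n-1}$, the two additive errors are $r^{n-1+\alpha}$ and $r^n$; since $n-1+\alpha<n$ for $\alpha<1$, the first one dominates, which yields $\omega$-minimality with $\omega(r)=r^\alpha$ and hence $C^{1,\alpha/2}$ --- strictly \emph{better} than the stated $C^{1,\alpha/(2n(1-\alpha)+2\alpha)}$. The dimension $n$ enters the true exponent not through $r^n$ but through the weaker volume-fixing term $|\tau|^\beta\le Cr^{n\beta}$ with $\beta<1$; only then can $r^{n\beta}$ become the binding error and produce the dimensional dependence when balanced against $r^{n-1+\alpha}$. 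The same objection applies verbatim to your planar argument: the two-dimensional improvement in \cite{CP2} also rests on a careful $\eps$--$\eps^\beta$ construction, not on the Lipschitz bound you invoke.
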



More recently, in Theorem~C of \cite{PS} the regularity result in dimension $n$ was generalized to the case of two different densities. As it is clear from the proof of this result, the H\"older regularity of an optimal set only depends on the H\"older regularity of the density $h$ (weighting the perimeter), while no regularity is needed on $f$. 

\begin{theoremA}[\cite{PS}]\label{PS}
	\label{prop_densities_low_reg2}
	Let $h$ be a density of class $C^{0,\alpha}(\R^n,\R^+)$ for some $\alpha \in (0,1]$ and $f$ be a locally bounded function. Then, if $E$ is an isoperimetric set, we have that $\partial E=\partial^*E$ up to $\mathcal H^{n-1}$-negligible sets, and  $\partial^*E \in C^{1, \alpha/(2n(1-\alpha) + 2\alpha)}$.
\end{theoremA}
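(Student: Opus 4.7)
The plan is to show that $E$ is a quasi-minimizer of the \emph{Euclidean} perimeter with a power-law error term, and then to invoke the classical regularity theory for such almost-minimizers. Concretely, I would establish the existence of constants $C>0$ and $r_0>0$ such that, for every $x_0\in\partial E$, every $r\in(0,r_0)$, and every set $F$ with $E\triangle F\subset B_r(x_0)$,
\begin{equation*}
 P_\eu(E;B_r(x_0)) \le P_\eu(F;B_r(x_0)) + C\,r^{n-1+\alpha}.
\end{equation*}
Tamanini's regularity theorem then yields both $\mathcal H^{n-1}(\partial E\setminus\partial^* E)=0$ and $\partial^* E\in C^{1,\alpha/2}$; since $\alpha/2\ge\alpha/(2n(1-\alpha)+2\alpha)$ whenever $\alpha\in(0,1]$, the stated H\"older exponent follows a fortiori.

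The first ingredient is a \emph{volume-fixing variation} in the spirit of \cite{CP1}. Since $\partial E$ is bounded ($E^c$ being bounded by \cite{CP1,PS1}) and $\mathcal H^{n-1}(\partial^* E)>0$, one selects a point $x_1\in\partial^* E$ and a small radius $\rho_0>0$ such that, for every $|s|$ below some threshold and every set $G$ coinciding with $E$ outside $B_{\rho_0}(x_1)$, there exists a perturbation $G_s$ with $G_s\triangle G\subset B_{\rho_0}(x_1)$, $V_f(G_s)-V_f(G)=s$, and $|P_h(G_s)-P_h(G)|\le C_0|s|$, where $C_0$ depends only on local bounds for $f$ and $h$ near $x_1$.

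For a competitor $F$ with $E\triangle F\subset B_r(x_0)$ and $|x_0-x_1|\ge 2\rho_0$, local boundedness of $f$ yields $|V_f(F)-V_f(E)|\le\|f\|_{L^\infty(B_r(x_0))}|E\triangle F|\le Cr^n$, so the variation applied to $F$ at $x_1$ produces $\tilde F$ with $V_f(\tilde F)=V_f(E)$ and $P_h(\tilde F)\le P_h(F)+C_0 C r^n$. Combining $P_h(E)\le P_h(\tilde F)$ with the H\"older continuity $|h(x)-h(x_0)|\le[h]_\alpha r^\alpha$ on $B_r(x_0)$ gives
\begin{equation*}
 h(x_0)\,P_\eu(E;B_r(x_0))\le h(x_0)\,P_\eu(F;B_r(x_0))+[h]_\alpha r^\alpha\bigl[P_\eu(E;B_r(x_0))+P_\eu(F;B_r(x_0))\bigr]+C_1 r^n.
\end{equation*}
Inserting $F=E\cup B_r(x_0)$ or $F=E\setminus B_r(x_0)$, for which $P_\eu(F;B_r(x_0))\le\mathcal H^{n-1}(\partial B_r)\le Cr^{n-1}$, and absorbing the multiplicative error for $r$ small, one extracts the upper density bound $P_\eu(E;B_r(x_0))\le Cr^{n-1}$. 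Restricting without loss of generality to competitors with $P_\eu(F;B_r(x_0))\le P_\eu(E;B_r(x_0))$ (the opposite case being trivial) and inserting this density bound into the displayed inequality then yields the claimed quasi-minimality.

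The main technical obstacle is the \emph{uniformity} of the volume-fixing constant $C_0$ and of the separation condition $|x_0-x_1|\ge 2\rho_0$ as $x_0$ ranges over $\partial E$: a single choice of $x_1$ only works for $x_0$ at a positive distance from $x_1$. Exploiting the boundedness of $\partial E$, one can cover it by finitely many balls $B_{\rho_0}(x_1^{(i)})$ with $x_1^{(i)}\in\partial^* E$, arranged so that every $x_0\in\partial E$ is at distance at least $2\rho_0$ from at least one of them; applying the variation with the corresponding reference point, the resulting constants remain uniformly finite because all relevant local quantities are confined to a fixed compact neighborhood of $\partial E$.
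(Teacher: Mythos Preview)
Your overall strategy---establish $\omega$-minimality of $E$ for the Euclidean perimeter via a volume-fixing variation at a distant reference point, then invoke Tamanini---is exactly the route taken in \cite{PS} (as summarized in this paper). The gap lies in the volume-fixing step, and it is precisely the reason the stated H\"older exponent is $\alpha/(2n(1-\alpha)+2\alpha)$ rather than the $\alpha/2$ your argument would yield.

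The linear estimate $|P_h(G_s)-P_h(G)|\le C_0|s|$ is \emph{not} available when $h$ is only $C^{0,\alpha}$ with $\alpha<1$. Any local deformation that changes the weighted volume by $s$ in a fixed reference ball $B_{\rho_0}(x_1)$ displaces boundary points by an amount $|t|\sim|s|$, and the density $h$ at the displaced points differs from its original values only by $O(|t|^\alpha)$. Since the old and new boundary portions carry comparable, \emph{non-vanishing} $\mathcal H^{n-1}$-measure (of order $\rho_0^{\,n-1}$, which is fixed), the weighted perimeter changes by $O(|s|^\alpha)$ rather than $O(|s|)$; a concrete worst case is $h(x)=1+|x_n|^\alpha$ near a flat piece $\{x_n=0\}\subset\partial^*E$, pushed in the $e_n$ direction. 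This is exactly why \cite{CP1,PS} introduce the $\eps$--$\eps^\beta$ property with an exponent $\beta<1$ depending on $\alpha$ (and, through the construction, on $n$). With the correct $\beta$ the volume-fixing contributes an error of order $r^{n\beta}$ instead of $r^n$, and for $\alpha<1$ this dominates your H\"older error $r^{n-1+\alpha}$, so the resulting $\omega$-minimality exponent is only $2\sigma$ with $\sigma=\alpha/(2n(1-\alpha)+2\alpha)$. Both your upper density bound and the final quasi-minimality inequality inherit this weaker error, and $C^{1,\alpha/2}$ does not follow.
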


We will later build up on these regularity results and we will thus refer to $C^{1,\sigma}$  with  $\sigma=\alpha/(2n(1-\alpha) + 2\alpha)$ as the \emph{initial  regularity}.

The proof of the regularity result in any dimension (for both the case of single and double density) consists in showing that if $E$ is an isoperimetric set with an $\alpha$-H\"older density $h$, then it is an $\omega$-minimal set (or almost-minimal set) for a certain modulus of continuity $\omega(r)=r^{2\sigma}$. Hence, standard regularity theory for $\omega$-minimal sets applies and allows to obtain $C^{1,\sigma}$ regularity of $\partial E$.

We observe that using this approach, which relies on $\omega$-minimality, the order $\sigma$ that one can reach tends to $1/2$ when $\alpha\rightarrow 1$. In the 2-dimensional result of Theorem \ref{prop_densities_low_reg1}, the exponent $\frac{\alpha}{3-2\alpha}$ is still not-optimal but tends to $1$ for $\alpha \rightarrow 1$.

The crucial ingredient in the proof of both Theorems \ref{prop_densities_low_reg1} and \ref{prop_densities_low_reg2} is the so-called $\eps-\eps^\beta$ property, first established in  \cite[Theorem B]{CP1} for the case of a single density, and then generalized to the case of double density in \cite{PS}. Roughly speaking this property says that it is possible to modify a set $E$ by changing its volume of an amount $\eps$ and increasing its perimeter of an amount proportional to at most  $\eps^\beta$. In the case of a Lipschitz density the exponent $\beta$ can be chosen to be $1$, while for a  H\"older density it must be chosen depending on the H\"older regularity of the density. In the case of double density, as mentioned before, only the density on the perimeter is needed to be H\"older continuous. 

The aim of the present paper is to improve these regularity results in the setting of  H\"older continuous densities and in general dimensions. More specifically, we prove the following:

\begin{theorem}
\label{thm_main}
Let $h$ be a density of class $C^{0,\alpha}(\R^n,\R^+)$ and $f$ be a density of class $C^{0,\gamma}(\R^n,\R^+)$ for some $\alpha$ and $\gamma \in (0,1)$.  Then the boundary of any isoperimetric set is of class $C^{1,\alpha/(2-\alpha)}$, except for a singular set of Hausdorff dimension at most $n-8$. 
\end{theorem}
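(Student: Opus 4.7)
The plan is to bootstrap from the initial regularity $C^{1,\sigma_0}$ provided by Theorem~\ref{PS} (with $\sigma_0=\alpha/(2n(1-\alpha)+2\alpha)$) to the sharp H\"older exponent $\alpha/(2-\alpha)$ via an iterative self-improvement. The exponent $\alpha/(2-\alpha)$ is precisely the fixed point of the recursion $\sigma\mapsto\alpha(1+\sigma)/2$, which arises naturally from a comparison argument exploiting the current $C^{1,\sigma}$ regularity of $\partial E$ together with the $C^{0,\alpha}$-regularity of $h$. Since Tamanini's regularity theorem promotes an $\omega$-minimality with $\omega(r)=Cr^\beta$ into $C^{1,\beta/2}$ regularity, one iteration step replaces the current exponent $\sigma_k$ by $\sigma_{k+1}=\alpha(1+\sigma_k)/2$, whose iterate converges monotonically to $\alpha/(2-\alpha)$.

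Concretely, at any regular point $x_0\in\partial^*E$, after a rotation one writes $\partial E\cap B_r(x_0)$ as the graph of a function $u\in C^{1,\sigma_k}(B_r^{n-1})$. The weighted volume constraint $V_f(E)=m$ is neutralized via the $\varepsilon$-$\varepsilon^\beta$ property of \cite{PS}, which applies in the present double-density setting since $f$ is locally bounded by $f\in C^{0,\gamma}$. One then compares $u$ in $B_r$ with a competitor $v$ sharing its trace on $\partial B_r$, for example the harmonic replacement of $u$ or the minimizer of the frozen area functional. The estimate $\|u-v\|_{L^\infty(B_r)}\leq Cr^{1+\sigma_k}$, inherited from the current regularity, combined with $|h(x',u)-h(x',v)|\leq [h]_\alpha\|u-v\|_\infty^\alpha$, controls the ``normal'' contribution of the density variation to the excess by $Cr^{n-1+\alpha(1+\sigma_k)}$. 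Tracking this together with the (subleading) volume-correction cost yields $\omega$-minimality of $E$ for the Euclidean perimeter with $\omega(r)=Cr^{\alpha(1+\sigma_k)}$, hence $\partial^*E\in C^{1,\sigma_{k+1}}$ by Tamanini. Iterating and passing to the limit yields the claimed $C^{1,\alpha/(2-\alpha)}$-regularity, while the dimension bound on the singular set follows from the standard Federer-type argument for almost-minimal surfaces.

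The main obstacle is that the \emph{tangential} oscillation of $h$ along the graph, i.e., the variation of $h(\,\cdot\,,u(x_0'))$ in $x'$, is only of order $r^\alpha$; a naive pointwise freezing of $h$ would make this appear as an error term and stall the iteration at the Tamanini exponent $\alpha/2$. To recover the sharper rate $r^{\alpha(1+\sigma_k)}$, the tangential oscillation must be absorbed rather than discarded: one compares the functionals in the form $\int h(x',u)\bigl(\sqrt{1+|\nabla u|^2}-\sqrt{1+|\nabla v|^2}\bigr)\,dx'$, so that the $x'$-variation of $h$ multiplies the small factor $\sqrt{1+|\nabla u|^2}-\sqrt{1+|\nabla v|^2}$ and ceases to dominate the error budget. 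Only the \emph{normal} variation $|h(x',u)-h(x',v)|$ then enters to leading order, making each iteration step yield a genuine improvement. A secondary issue is the quantitative form of the $\varepsilon$-$\varepsilon^\beta$ property: one must verify that the exponent $\beta$ is large enough to keep the volume-correction cost strictly subleading throughout the iteration.
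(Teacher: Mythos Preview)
Your central mechanism is sound --- freezing $h$ along the graph of $u$ so that only the normal variation $|h(x',u)-h(x',v)|$ enters at leading order is exactly the trick the paper uses (see the proof of Lemma~\ref{lambda-1}). The gap is in the black box you invoke afterwards. Tamanini's theorem requires $\omega$-minimality against \emph{every} competitor $F$ with $F\triangle E\subset B_r$, and for an arbitrary such $F$ you have no control on $\|u-w\|_{L^\infty}$ (where $w$ is the graph function of $\partial F$): the competitor's boundary can sit anywhere in $B_r$, so the normal variation $|h(x',u)-h(x',w)|$ is only $O(r^\alpha)$ in general. Your improved bound $\|u-v\|_{L^\infty}\lesssim r^{1+\sigma_k}$ relies on the \emph{a priori} regularity of the specific $v$ you build (harmonic replacement or unconstrained area minimizer), and simply does not transfer to generic competitors. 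Thus your argument does not establish $\omega$-minimality with $\omega(r)=Cr^{\alpha(1+\sigma_k)}$, and Tamanini cannot be applied at the improved exponent; via this route you are stuck at $\omega(r)=Cr^\alpha$, i.e., at $C^{1,\alpha/2}$.

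What survives is the comparison with a single well-chosen $v$, and that is precisely the Campanato approach the paper takes. Rather than going through $\omega$-minimality, one derives decay of the tilt excess $\int_{B_r}|Du-(Du)_r|^2$ directly: the specific $v$ (in the paper, the constrained minimizer of the frozen area functional) has good decay by elliptic theory applied to its Euler--Lagrange equation, and this is transferred to $u$ via the error $\int_{B_R}|Du-Dv|^2$. Two further differences are worth noting. First, the paper keeps the weighted volume constraint in the comparison problem instead of paying the $\varepsilon$--$\varepsilon^\beta$ cost; this introduces a Lagrange multiplier $\lambda$ whose size must be controlled iteratively (Lemmas~\ref{rem-lambda}--\ref{L1}, Corollary~\ref{Du-Dv}), and is where the H\"older assumption on $f$ actually enters. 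Second, the paper's error estimate already produces the sharp contribution $R^{n-1+2\alpha/(2-\alpha)}$ from the perimeter side in one shot (via Young's inequality with exponents $2/\alpha$ and $2/(2-\alpha)$ applied to $|u-v|^\alpha a(Dv)$), and the iteration $\sigma\mapsto\sigma+\gamma/2$ is needed only to push the Lagrange-multiplier term past it; this terminates in finitely many steps, whereas your recursion $\sigma_{k+1}=\alpha(1+\sigma_k)/2$ merely converges to $\alpha/(2-\alpha)$ and would require an additional uniformity-in-$k$ argument to reach the endpoint.
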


{We emphasize that the H\"older exponent $\frac{\alpha}{2-\alpha}$ in our statement is the expected optimal one, see also Example \ref{Ex1} below.} Moreover, it does not depend on the dimension and it indeed improves on  the regularity results established before because
\begin{equation*}
 \tfrac{\alpha}{2 - \alpha} > 
 \begin{cases} \begin{array}{l l}
      \frac{\alpha}{2n(1-\alpha) + 2\alpha} \quad & \text{for } n \geq 2, \\
      \frac{\alpha}{3-2\alpha} \quad & \text{for } n = 2\mbox{ and }f=h, 
 \end{array} \end{cases}  
\end{equation*}
for each $\alpha \in (0,1)$. 
 In particular, also the expected asymptotic behavior $\alpha/(2 - \alpha) \to 1$ as $\alpha \nearrow 1$ is now achieved for all dimensions $n \geq 2$. 
 
 At first glance, the loss in the order of the H\"older semi-norm from $\alpha$ in the differentiable setting of Theorem \ref{prop_densities_high_reg} to $\frac{\alpha}{2-\alpha}$ in the  continuous setting in Theorem \ref{thm_main} seems surprising.  This feature  is, however, well known from classical regularity theory for the minimization of variational functionals of the form $\mathcal F[w]= \int_\Omega F(x,w,Dw)\, dx$ among Sobolev functions in a given Dirichlet class, in the specific situation that merely an $\alpha$-H\"older continuity assumption is imposed on the maps $u \mapsto F(x,u,z)$ and $(x,u) \mapsto D_z F(x,u,z)$ which does not allow for the passage to an Euler--Lagrange equation. In this case, the optimal regularity of minimizers is precisely $C^{1, \alpha/(2 - \alpha)}$, see~\cite{Philips}.

The hypothesis that the volume density~$f$ must be Hölder continuous seems to be an artifact of our method of proof and we actually do not believe that it is a necessary assumption. Indeed, our optimal regularity result is independent of the H\"older exponent $\gamma$, and the suboptimal results from Theorem \ref{prop_densities_low_reg2} hold true also without any continuity requirement. Unfortunately, we do currently not see how to remove this hypothesis.

Finally, the optimal bound on the Hausdorff dimension of the singular set follows from the standard regularity theory for $\omega$-minimal sets, established by Tamanini in \cite{Tam}. Indeed, as commented above, in \cite{CP1} and \cite{PS} it is proved that an isoperimetric set with density is $\omega$-minimal for a certain modulus of continuity $\omega(r)=r^{2\sigma}$, and hence Tamanini's regularity result applies.

We conclude this introduction by discussing the strategy of the proof of our main result.




 Thanks to the already known regularity result of Theorem~\ref{prop_densities_low_reg2}, we can work with a local representation of the reduced boundary of isoperimetric sets (at some given regular point) in terms of the graph of a $C^1$-function. More precisely, in order to study the local regularity of isoperimetric sets, this amounts to considering minimizers~$u$ of the functional
\begin{equation}\label{3}
w \mapsto  \int_{B_R(0)} h(x',w) \big( 1 + \kabs{Dw}^2 \big)^{\frac{1}{2}} \dx' 
\end{equation}
 among all functions $w$ satisfying the constraint
\begin{equation}\label{4}
 \int_{B_R(0)} \int_0^{w(x')} f(x',t) \dt \dx' = m
\end{equation}
for a given constant $m$ and with prescribed boundary values on $\partial B_R(0)$. Notice that here, $B_R(0)$ denotes an open ball in $\R^{n-1}$ that we have centered at the origin for convenience and we may choose $R\le R_0\le 1$. 


Our method for establishing (optimal) H\"older regularity is based on the so-called direct approach from classical regularity theory for minimization problems, see e.g.~\cite{GG82,GG83} for the original theory without integral constraints, and \cite{ACM,Beck} for more recent text books. 
To this end, we define in Section~\ref{Section_comparison} a suitable comparison problem by keeping the original density only for the volume constraint, and by freezing it for the perimeter. Via a combination of the initial regularity result for the minimizer of the comparison problem, suitable estimates on the Lagrange multiplier (coming from the volume constraint),  and classical Schauder theory (applied to the associated Euler--Lagrange equation which does now indeed exist), the minimizer is then shown to have optimal decay estimates. Finally, in Section~\ref{Section_regularity_proof} the decay estimates of this comparison function are then carried over to the minimizer of the original constrained minimization problem, thus completing the proof of Theorem~\ref{thm_main}, via the Campanato characterization of H\"older continuous functions.

We finally want to give an example that shows that the expected optimal H\"older exponent of the boundary of isoperimetric sets in our setting is $\frac{\alpha}{2-\alpha}$, which we find in this paper. 

\begin{example}\label{Ex1}
For simplicity, we consider the two-dimensional problem and an isoperimetric set which locally can be written as the graph of a function~$w$ over the interval~$(0,\ell)$, with $w>0$ in~$(0,\ell)$ and $w(0)=0$. We suppose that the volume density is locally constant, with $f\equiv 1$, and that the {perimeter density is locally of the form $h(x_1,x_2) = H(x_1, |x_2|^{\alpha})$ for some $\alpha \in (0,1)$ and some function~$H$ such that both $H$ and its derivative $\partial_2 H$ with respect to the second variable are bounded from below and from above by positive constants}. Then, according to \eqref{3} and \eqref{4}, $w$ minimizes the functional
	\[
	{w\mapsto \int_0^\ell H(z,|w(z)|^\alpha) \sqrt{1+(w'(z))^2} \, dz,}
	\]
	among all $w$ satisfying the constraint
	\[
	\int_0^\ell w(z)\, dz = m.
	\]
	In this scenario, we may compute the Euler--Lagrange equation inside the interval and find that
	\[
	{\left(H(z,|w|^\alpha) \frac{w'}{\sqrt{1+ (w')^2}} \right)'   + \partial_2 H(z,|w|^\alpha) \alpha w^{\alpha-1}  \sqrt{1+(w')^2}= \lambda} ,
	\]
	where $\lambda \in \R$ is the Lagrange multiplier. In view of the assumption $w(0)=0$ and knowing that $w$ is a $C^{1,\sigma}$ function, it must hold that $w(z) \approx z^{1+\sigma}$ near $z=0$. Plugging this Ansatz into the Euler--Lagrange equation, we see that the first term is of the order $O(z^{\sigma-1})$, while the second one is $O(z^{(\alpha-1)(1+\sigma)})$. The singularity of both terms enforces that both exponents are identical, $\sigma-1 = (\alpha-1)(1+\sigma)$, which yields $\sigma = \frac{\alpha}{2-\alpha}$.
 \end{example}

The paper is organized as follows:
\begin{itemize}
	\item Section 2 is devoted to introduce some notations and preliminaries;
	\item in Section 3, we introduce the comparison problem and prove some crucial preliminary Lemmas that relate, in a quantitative way, the Lagrange multiplier $\lambda$ with the $L^2$-distance between the gradient of the comparison function and the gradient of the solution of our original weighted problem;
	\item Section 4, which is the core of the paper, deals with decay estimates for the comparison function $v$;
	\item finally, in Section 5, we transfer these decay estimates from $v$ to the solution $u$ of our original problem and deduce our regularity result. 
\end{itemize}

\section{Notation and preliminaries}\label{Section:prelim}
We start by introducing some notation.
In the following, we will denote by $x=(x',x_n)$ a point in $\R^n=\R^{n-1}\times \R$.
Given $r>0$ and $x_0'\in \R^{n-1}$, we denote by $B_r(x'_0)$ the ball in $\R^{n-1}$ centered at $x_0'$ and with radius $r$, and we write   $B_r \coloneqq B_r(0)$ for simplicity.

Given a measurable function $w$ defined on $\R^{n-1}$, we denote its mean integral on a certain measurable set $A\subset \R^{n-1}$ by
$$(w)_A \coloneqq \frac{1}{|A|}\int_A w(x')\,dx',$$
where $|A|$ stands for the Lebesgue measure of $A$.
In the particular case in which $A=B_r(x_0')$ and it is clear from the context what is the center $x_0'$, we simply use the abbreviation $(w)_r$ instead of $(w)_{B_r(x_0')}$.

As mentioned above, we will rely on Campanato's characterization of H\"older continuity, which we recall here. See also Section 3.1 in \cite{ACM}.

\begin{propositionA}[\cite{Campanato63}, Teorema~I.2]
\label{lemma_Campanato}
Let $B_R$ be a ball in $\R^{n-1}$, $\beta \in (0,1]$ and $p \in [1,\infty)$. A function $w \in L^1(B_R)$ is \textup{(}up to the choice of a suitable representative\textup{)} H\"older continuous with exponent~$\beta$, i.e.,~$w \in C^{0,\beta}(\overline{B_R})$, if and only if there exists a constant~$C$ such that for each ball $B_\rho(y')$ centered in some point $y' \in B_R$ there holds
\begin{equation*}
\int_{B_R \cap B_\rho(y')} \babs{w - (w)_{B_R \cap B_\rho(y')}}^p \dx' \leq C\rho^{n-1+ p\beta}.
\end{equation*}
\end{propositionA}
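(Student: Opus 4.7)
The plan is to establish the two directions of the equivalence separately. The ``only if'' direction is the straightforward one: if $w \in C^{0,\beta}(\overline{B_R})$ with semi-norm $[w]_{C^{0,\beta}}$, then for any $y' \in B_R$, any $\rho>0$ and any $x' \in B_R \cap B_\rho(y')$, Jensen's inequality gives
$$\babs{w(x') - (w)_{B_R \cap B_\rho(y')}}^p \leq \frac{1}{|B_R \cap B_\rho(y')|} \int_{B_R \cap B_\rho(y')} \babs{w(x')-w(z')}^p \,dz' \leq [w]_{C^{0,\beta}}^p (2\rho)^{p\beta}.$$
Integrating over $B_R \cap B_\rho(y')$ and using $|B_R \cap B_\rho(y')| \leq \omega_{n-1}\rho^{n-1}$ yields the claimed integral bound with $C = 2^{p\beta}\omega_{n-1}[w]_{C^{0,\beta}}^p$.

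For the ``if'' direction, the strategy is to show that the integral condition forces the ball averages to form a Cauchy sequence as the radius shrinks, producing a H\"older continuous representative $\tilde w$. Fix $y' \in B_R$ and set $\rho_k \coloneqq 2^{-k}\rho_0$ with $\rho_0 \leq R$. By H\"older's inequality and the hypothesis,
$$\babs{(w)_{B_R \cap B_{\rho_{k+1}}(y')} - (w)_{B_R \cap B_{\rho_k}(y')}} \leq \frac{1}{|B_R \cap B_{\rho_{k+1}}(y')|^{1/p}} \bigl(C \rho_k^{n-1+p\beta}\bigr)^{1/p} \lesssim \rho_k^\beta,$$
where the geometry of the ball $B_R$ provides the lower bound $|B_R \cap B_\rho(y')| \geq c\rho^{n-1}$ uniformly for $y' \in B_R$ and $\rho \leq R$. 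Summing the geometric series $\sum_k \rho_k^\beta$ shows that $(w)_{B_R \cap B_{\rho_k}(y')}$ is Cauchy, hence converges to some value $\tilde w(y')$; passing $k \to \infty$ in the partial sums also gives the pointwise decay estimate $|\tilde w(y') - (w)_{B_R \cap B_\rho(y')}| \lesssim \rho^\beta$ for every $\rho \leq \rho_0$. By the Lebesgue differentiation theorem, $\tilde w = w$ almost everywhere, so $\tilde w$ is indeed a representative of $w$.

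To compare two close points $y', z' \in B_R$ with $r \coloneqq |y'-z'|$ small, choose $\rho \coloneqq 2r$ and write the telescopic decomposition
$$\babs{\tilde w(y') - \tilde w(z')} \leq \babs{\tilde w(y') - (w)_{B_R \cap B_\rho(y')}} + \babs{(w)_{B_R \cap B_\rho(y')} - (w)_{B_R \cap B_\rho(z')}} + \babs{(w)_{B_R \cap B_\rho(z')} - \tilde w(z')}.$$
The first and third terms are $O(r^\beta)$ by the previous step. For the middle term, observe that $B_R \cap B_\rho(y')$ and $B_R \cap B_\rho(z')$ both lie inside $B_R \cap B_{2\rho}(y')$, whose measure is comparable to $\rho^{n-1}$; comparing both averages to $(w)_{B_R \cap B_{2\rho}(y')}$ via the same H\"older/hypothesis argument also bounds this term by $O(r^\beta)$. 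Combining the three gives $|\tilde w(y')-\tilde w(z')| \leq C' r^\beta$, so $\tilde w \in C^{0,\beta}(\overline{B_R})$.

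The main obstacle I expect is verifying the uniform measure-density bound $|B_R \cap B_\rho(y')| \geq c\rho^{n-1}$ for points $y'$ near $\partial B_R$, which is what allows the dyadic iteration to work uniformly across $\overline{B_R}$. Once this is in place (it follows because a ball is a Lipschitz domain with uniform interior cone condition), all other estimates reduce to routine applications of H\"older's inequality and the triangle inequality, and the constant $C'$ in the H\"older semi-norm depends only on $n$, $p$, $\beta$ and the constant $C$ from the hypothesis.
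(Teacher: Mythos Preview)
Your argument is correct and is precisely the classical proof of Campanato's characterization (dyadic telescoping of averages combined with the measure-density property of the domain). Note, however, that the paper does not supply its own proof of this proposition: it is quoted as a known result from \cite{Campanato63} (with a pointer to Section~3.1 of \cite{ACM}), so there is nothing in the paper to compare your approach against beyond confirming that you have reproduced the standard argument.
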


The  oscillation on the left-hand side is a monotone function of $\rho$ because for any measurable set $A\subset \R^{n-1}$ the mapping 
\begin{equation}
\label{eq_minimality_mean_value}
c \mapsto \int_A {|w-c|^p} \, dx' \quad { \text{is minimized at } c = (w)_A}.
\end{equation}
The following  iteration lemma is thus well-suited for  oscillations  and it is an elementary though fundamental tool in elliptic regularity theory as it allows to pass from $\rho^{\alpha_2}$ to $r^{\alpha_2}$ and to drop an additive non-decaying term (the one that involves $\eps$)  at the expense of lowering the order of decay. See also Lemma 3.13 in \cite{ACM}.

\begin{lemmaA}[\cite{Giaquinta83}, Lemma~III.2.1]
\label{lemma_iteration}
Assume that $\phi(\rho)$ is a non-negative, real-valued, non-decreasing function defined on the interval $[0,R_0]$ which satisfies
\begin{equation*}
  \phi(r) \leq C_1 \Big[ \Big(\frac{r}{\rho}\Big)^{\alpha_1} + \varepsilon \Big] \phi(\rho) + C_2 \rho^{\alpha_2}
\end{equation*}
for all $r \leq \rho \leq \rho_0$, some non-negative constants $C_1$, $C_2$, and positive exponents $\alpha_1 > \alpha_2$. Then there exists a positive number $\varepsilon_0 = \varepsilon_0(C_1,\alpha_1,\alpha_2)$ such that for $\varepsilon \leq \varepsilon_0$ and all $r \leq \rho \leq \rho_0$ we have
\begin{equation*}
\phi(r) \leq c(C_1,\alpha_1,\alpha_2) \Big[ \Big( \frac{r}{\rho} \Big)^{\alpha_2} \phi(\rho) + C_2 r^{\alpha_2} \Big].
\end{equation*}
\end{lemmaA}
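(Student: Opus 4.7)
The plan is the standard dyadic iteration argument from linear elliptic regularity. The key idea is that the hypothesis, when applied at scales $r = \tau\rho$ for a fixed small $\tau$, becomes a contraction-type recursion on a geometric sequence of radii, which can be unfolded and then transferred from the discrete grid $\{\tau^k \rho\}$ to arbitrary radii $r$ by using the monotonicity of $\phi$.

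First, I would \emph{fix an intermediate exponent} $\beta$ with $\alpha_2 < \beta < \alpha_1$, for instance $\beta := \tfrac12(\alpha_1+\alpha_2)$. Since $\alpha_1 > \beta$, one can choose $\tau = \tau(C_1,\alpha_1,\alpha_2) \in (0,1)$ small enough that $C_1 \tau^{\alpha_1} \leq \tfrac12 \tau^\beta$, i.e.~$\tau^{\alpha_1 - \beta} \leq 1/(2C_1)$. Then set
\begin{equation*}
\varepsilon_0 := \frac{\tau^\beta}{2C_1},
\end{equation*}
so that for every $\varepsilon \leq \varepsilon_0$ we have $C_1(\tau^{\alpha_1}+\varepsilon) \leq \tau^\beta$. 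Plugging $r=\tau\rho$ into the hypothesis yields the one-step contraction
\begin{equation*}
\phi(\tau \rho) \leq \tau^\beta\, \phi(\rho) + C_2 \rho^{\alpha_2}, \qquad \rho \leq \rho_0.
\end{equation*}

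Second, I would \emph{iterate} this one-step inequality. A straightforward induction on $k\geq 0$ gives
\begin{equation*}
\phi(\tau^k \rho) \leq \tau^{k\beta}\phi(\rho) + C_2 \rho^{\alpha_2}\sum_{j=0}^{k-1} \tau^{(k-1-j)\beta}\tau^{j\alpha_2}.
\end{equation*}
Because $\beta > \alpha_2$, the factor $\tau^{j(\alpha_2-\beta)}$ grows in $j$, so the geometric sum is dominated by its last term and is therefore bounded by $c(\tau,\beta,\alpha_2)\,\tau^{(k-1)\alpha_2}$. Using also $\tau^{k\beta} \leq \tau^{-\alpha_2}\tau^{k\alpha_2}$ (again because $\beta > \alpha_2$), we deduce
\begin{equation*}
\phi(\tau^k\rho) \leq c\,\tau^{k\alpha_2}\phi(\rho) + c\,C_2 (\tau^k \rho)^{\alpha_2},
\end{equation*}
with $c = c(C_1,\alpha_1,\alpha_2)$.

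Finally, I would \emph{pass from the discrete geometric sequence to arbitrary radii}. Given $r \leq \rho \leq \rho_0$, let $k\geq 0$ be the unique integer with $\tau^{k+1}\rho < r \leq \tau^k \rho$. The monotonicity assumption gives $\phi(r) \leq \phi(\tau^k\rho)$; combined with the bounds $\tau^{k\alpha_2} \leq \tau^{-\alpha_2}(r/\rho)^{\alpha_2}$ and $(\tau^k\rho)^{\alpha_2} \leq \tau^{-\alpha_2} r^{\alpha_2}$, this directly yields the claimed estimate after absorbing the harmless factor $\tau^{-\alpha_2}$ into the constant $c(C_1,\alpha_1,\alpha_2)$. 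The only subtle point is that naively trying to iterate with exponent $\alpha_2$ (rather than the intermediate $\beta$) produces an unwanted factor of $k$ from the geometric sum; the strict gap $\alpha_1 > \alpha_2$ is exactly what allows the choice of an intermediate $\beta$ that kills this logarithmic loss, so this is where the hypothesis $\alpha_1 > \alpha_2$ is truly used.
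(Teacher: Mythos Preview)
Your argument is correct and is exactly the classical dyadic iteration proof of this lemma (as found in Giaquinta's book, Lemma~III.2.1). The paper does not supply its own proof of this statement---it merely quotes the lemma with a citation---so there is nothing further to compare; your approach coincides with the standard one the authors are invoking.
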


Notice that if the quantity $\phi$ in the lemma is indeed the oscillation, the statement implies H\"older regularity of the order  $(\alpha_2 - n+1)/p$ via Proposition~\ref{lemma_Campanato}.

We now collect at one spot  for the reader's convenience  the H\"older regularity of the density functions:
\begin{equation*}
	\begin{aligned}
		h\in C^{0,\alpha}  &\quad \mbox{regularity of perimeter density},\\
f\in C^{0,\gamma}  &\quad \mbox{regularity of volume density}. 
\end{aligned}
\end{equation*}
Since our final statement in Theorem \ref{thm_main} is independent of $\gamma$, we may without loss of generality suppose that $\gamma $ is small, for instance, 
\begin{equation}
\label{2}
\gamma <\min\bigg\{\frac{\alpha}2,\frac{2(1-\alpha)}{2-\alpha}\bigg\}.
\end{equation}
This choice will slightly simplify our notation later on.


%
%
%

Finally, we conclude this section with a comment on local bounds for the density functions. 
Since we are assuming that the densities 
$f$ and $h$ are positive continuous functions,  they are in particular locally bounded both from above and  away from zero. In particular, for any $T>0$, there exists a constant $M>0$ that can be chosen independently from our localization scale $R\le R_0$ introduced in \eqref{3} and \eqref{4} such that
\begin{equation}
\label{local-bound}
\frac{1}{M}\le f(x',t)\le M\quad\mbox{and}\quad \frac{1}{M}\le h(x',t)\le M \quad\mbox{for any } (x',t)\in B_R\times (-T,T).
\end{equation}
{For a fixed local representation of the reduced boundary as the graph of a $C^1$-function~$u$ minimizing the weighted surface area functional~\eqref{3} under the constraint \eqref{4}, } we will always choose $T$ large enough so that $\|u\|_{L^{\infty}(B_R)} \le T$ uniformly in $R\le R_0$, which allows for choosing $t=u(x')$.
Moreover, as a consequence of the initial regularity statement of Theorem \ref{prop_densities_low_reg2}, the gradient of~{$u$} is locally bounded by a constant $K \geq 1$ that is independent of $R\le R_0$, i.e., 
\begin{equation}\label{8}
	\|D u\|_{L^\infty(B_R)}\le K.
\end{equation}

In the rest of the paper, when we write
$A \lesssim B$, we mean that there exists a constant $c$ which depends only on $n,\,K,\,M,\,[f]_{C^{0,\gamma}}$ and $[h]_{C^{0,\alpha}}$ (hence, in particular, it is independent of~$R$), such that $A \le c B$.

\section{The comparison problem and bounds on the Lagrange multiplier}
\label{Section_comparison}


As outlined in the introduction, we will study the regularity of the boundary of an isoperimetric set with density $E$ by means of classical regularity estimates for the (volume-constrained) minimizer of the weighted surface area functional
\begin{equation*}
w \mapsto  \int_{B_R} h(x',w) a(Dw) \dx' ,
\end{equation*}
where, for notational convenience, we have used the abbreviation $a(z) \coloneqq ( 1 + \kabs{z}^2 )^{\frac{1}{2}}$ and $B_R$ stands, as before, for an $n-1$ dimensional Euclidean ball of radius $R$ (that we  assume to be centered at $0$). The derivation of this functional as the graph representation of the boundary is fairly standard in the context of isoperimetric and  minimal surface problems, we omit the details. In the following discussion, the minimizer will be denoted by $u$.

The proof of our main regularity result is based on  the derivation of regularity estimates for a comparison problem, in which the density $h(x',w)$ in the surface area functional is frozen to a constant (and thus removed) in order to allow for the application of elliptic regularity arguments in the now computable Euler--Lagrange equations.

A further simplification is achieved by modifying the surface function $a(z)$ for large values of $z$. 
In fact, by \eqref{8}, we have that the variational problem remains unchanged if we substitute $a(z)$ by an increasing smooth and strongly convex function $a_K \geq a$ such that
\[
a_K(z) = \begin{cases} \left(1+|z|^2\right)^{\frac12} &\mbox{ if }|z|\le K,\\
c_K(1+|z|^2) &\mbox{ if }|z|\ge 2K,
\end{cases}
\]
for some constant $c_K$. Notice that by strong convexity, we mean that there exists a constant $\mu>0$ such that
\begin{equation}
\label{12}
a_K(z_2) \ge a_K(z_1) + D_z a_K(z_1)\cdot(z_2-z_1) + \frac{\mu}2|z_2-z_1|^2,
\end{equation}
for any $z_1,z_2\in \R^{n-1}$. This statement is equivalent to the ellipticity condition 
\begin{equation}
\label{13}
\xi\cdot D^2_z a_K(z) \xi \ge \mu |\xi|^2,
\end{equation}
for any $\xi\in \R^{n-1}$.

Our \emph{comparison problem} is thus the following: We study the problem
 of minimizing the (Euclidean) perimeter (still in the graph representation) 
\begin{equation}
\label{def_comp_functional}
 w \mapsto \int_{B_R}  a_K(Dw) \dx'
\end{equation}
among all functions $w  $ with $w=u$ on the boundary $\partial B_R$ which satisfy the weighted volume constraint
\begin{equation}
\label{def_comp_constraint}
 \int_{B_R} \int_0^{w(x')} f(x',t) \dt \dx' = \int_{B_R} \int_0^{u(x')} f(x',t) \dt \dx'.
\end{equation}

With standard compactness arguments, we obtain existence of minimizers of the comparison problem.

\begin{lemma}[Euler--Lagrange equations]
In the above setting, a minimizer $v \in u + W^{1,2}_0(B_R)$ to the functional~\eqref{def_comp_functional} under the constraint~\eqref{def_comp_constraint} always exists. Moreover, every minimizer $v$ satisfies an Euler--Lagrange equation with Lagrange multiplier $\lambda \in \R$: for every $\varphi \in W^{1,2}_0(B_R)$, there holds
\begin{equation}
\label{EL_comparison}
 \int_{B_R} D_z a_K(Dv(x')) \cdot D\varphi(x') \dx' + \lambda \int_{B_R} f(x',v) \varphi(x') \dx' = 0.
\end{equation}
\end{lemma}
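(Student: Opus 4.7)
The plan is a direct-method existence proof followed by the Lagrange multiplier rule in Banach spaces. Because the admissible class contains $u$ itself, the infimum of the functional \eqref{def_comp_functional} is finite. Fix a minimizing sequence $\{v_k\} \subset u + W^{1,2}_0(B_R)$ satisfying \eqref{def_comp_constraint}. The quadratic growth of $a_K$ at infinity together with $a_K \geq 1$ everywhere gives $\int_{B_R} |Dv_k|^2 \, dx' \leq C$, and combining with the Poincar\'e inequality applied to $v_k - u \in W^{1,2}_0(B_R)$ produces a uniform $W^{1,2}$-bound. Passing to a subsequence, $v_k \rightharpoonup v$ weakly in $W^{1,2}(B_R)$ and strongly in $L^1(B_R)$, with $v - u \in W^{1,2}_0(B_R)$ by weak closedness. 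Convexity of $a_K$ yields weak lower semicontinuity of the functional, while for the constraint, once one reduces to a uniformly $L^\infty$-bounded family (on which $f \leq M$ by \eqref{local-bound}), the estimate
\[
\left| \int_{B_R} \int_0^{v_k} f \, dt \, dx' - \int_{B_R} \int_0^{v} f \, dt \, dx' \right| \leq M \int_{B_R} |v_k - v| \, dx' \longrightarrow 0
\]
shows that $v$ satisfies \eqref{def_comp_constraint} and hence minimizes.

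For the Euler--Lagrange equation, set $F(w) \coloneqq \int_{B_R} a_K(Dw) \, dx'$ and $G(w) \coloneqq \int_{B_R} \int_0^{w} f(x',t) \, dt \, dx'$. Both are $C^1$ on the affine space $u + W^{1,2}_0(B_R)$, with Fr\'echet derivatives
\[
F'(v)[\varphi] = \int_{B_R} D_z a_K(Dv) \cdot D\varphi \, dx', \qquad G'(v)[\varphi] = \int_{B_R} f(x', v) \varphi \, dx'.
\]
Since $f(x', v(x')) \geq 1/M > 0$ pointwise by \eqref{local-bound}, any nonzero nonnegative test function $\psi \in W^{1,2}_0(B_R)$ gives $G'(v)[\psi] > 0$, so $G'(v)$ is a nontrivial continuous linear functional on $W^{1,2}_0(B_R)$. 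The standard Lagrange multiplier theorem in Banach spaces then produces $\lambda \in \mathbb{R}$ with $F'(v) + \lambda G'(v) = 0$ on $W^{1,2}_0(B_R)$, which is exactly \eqref{EL_comparison}.

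The only delicate point is the passage to the limit in the nonlinear volume constraint $G$ along the minimizing sequence: because $f$ is only locally bounded, one first restricts attention to a uniformly $L^\infty$-bounded subclass of admissible functions (justified by a truncation argument together with the boundedness of $u$ furnished by Theorem~\ref{prop_densities_low_reg2}) or invokes the Vitali convergence theorem together with Sobolev embedding. Beyond this, both conclusions follow from routine variational arguments, the key input being positivity of $f$, which ensures non-degeneracy of the constraint derivative $G'(v)$.
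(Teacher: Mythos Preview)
Your proposal is correct and follows exactly the paper's approach---direct method for existence (quadratic growth and convexity of $a_K$) together with the Lagrange multiplier rule for \eqref{EL_comparison}---supplying the details that the paper's two-sentence proof omits. One minor point: for the non-degeneracy of $G'(v)$ you should invoke $f>0$ directly rather than \eqref{local-bound}, since the latter presupposes an $L^\infty$ bound on $v$ that is not yet available at this stage; your Vitali/Sobolev alternative for the constraint passage is the clean route, as the truncation variant does not obviously preserve \eqref{def_comp_constraint}.
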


\begin{proof}Because $a_K(z)$ has quadratic growth, the direct method yields the existence of a minimizer of the comparison problem \eqref{def_comp_functional}, under the volume constraint \eqref{def_comp_constraint}. Moreover, since $a_K(z)$ is differentiable and the constraint is of isoperimetric type, there exists a Lagrange multiplier $\lambda\in \R$ such that \eqref{EL_comparison} holds.
\end{proof}

The following remark shows that the minimizer of the comparison problem reduces locally the Euclidean perimeter.

\begin{remark}[Energy estimate]
\label{rem_energy}
By minimality of~$v$ for the comparison problem~\eqref{def_comp_functional} with the volume constraint~\eqref{def_comp_constraint}, we have, via choice of $a_K\geq a$ and $\|D u\|_{L^\infty}\le K$
\[
 \int_{B_R} a(Dv) \dx'  \leq  \int_{B_R} a_K(Dv) \dx' 
   \leq  \int_{B_R} a_K(Du) \dx' =  \int_{B_R} a(Du) \dx'.
\]
Therefore, in what follows, all estimates can be written in terms of the original minimizer~$u$ only. For example, we have for every $p \in [1,2]$ that $|z|^p\le c(K)a_K(z)$, and thus
\begin{equation*}
 \int_{B_R} |Dv|^p \dx \leq c(K,n) R^{n-1}.
\end{equation*}
\end{remark}

We now derive two elementary estimates for weak solutions to the Euler--Lagrange equations
\[
\Div D_z a_K(Dv)  = \lambda f(x',v)\quad \mbox{in }B_R,\quad v=u\quad \mbox{on }\partial B_R
\]
cf.~\eqref{EL_comparison}. Our first is on the Lagrange multiplier.

\begin{lemma}[First  bound on $\lambda$]\label{rem-lambda}
There exists a constant $ R_* = R_*(n,f)$ such that if   $R \le R_0\le  R_*$ then we have the following estimate for the Lagrange multiplier $\lambda$: 
	\begin{equation*}
		\kabs{\lambda} \lesssim R^{-1}.
	\end{equation*}
	\end{lemma}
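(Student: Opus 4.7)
The plan is to test the Euler--Lagrange equation~\eqref{EL_comparison} with a standard cutoff function $\varphi = \eta \in C_c^\infty(B_R)$ satisfying $\eta \equiv 1$ on $B_{R/2}$, $0 \leq \eta \leq 1$ and $|D\eta| \leq 4/R$. Substituting this choice into~\eqref{EL_comparison} yields the identity
\begin{equation*}
|\lambda| \int_{B_R} f(x', v)\,\eta \, dx' = \left| \int_{B_R} D_z a_K(Dv) \cdot D\eta \, dx' \right|,
\end{equation*}
so the desired bound $|\lambda| \lesssim R^{-1}$ reduces to an upper bound of order $R^{n-2}$ on the right-hand side, together with a lower bound of order $R^{n-1}$ on the left-hand side.

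For the upper bound I plan to exploit the quadratic growth of $a_K$, which forces the linear estimate $|D_z a_K(z)| \lesssim 1 + |z|$. Combined with the energy estimate of Remark~\ref{rem_energy} (which provides $\int_{B_R}|Dv|\,dx' \lesssim R^{n-1}$) and the pointwise bound $|D\eta| \leq 4/R$, this immediately gives $|\int_{B_R} D_z a_K(Dv) \cdot D\eta\,dx'| \lesssim R^{n-2}$ via H\"older's inequality. This step is essentially mechanical.

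The lower bound is the main obstacle, since no $L^\infty$ estimate on the comparison minimizer $v$ is available a priori. My strategy is to apply the Poincar\'e inequality to $v - u \in W^{1,2}_0(B_R)$, which together with the $L^2$ energy bound from Remark~\ref{rem_energy} and the uniform estimate $\|u\|_{L^\infty(B_R)} \leq T$ yields $\int_{B_R} v^2\, dx' \lesssim R^{n-1}$ provided $R_* \leq 1$. Chebyshev's inequality then produces a threshold $T^* = T^*(n,K,T)$, independent of $R$, such that the set $\{x' \in B_{R/2} : |v(x')| \leq T^*\}$ has measure at least $\tfrac12 |B_{R/2}|$. On this set the local bound~\eqref{local-bound}, applied with $T$ enlarged to $T^*$ and $M$ replaced by a correspondingly enlarged constant $M^* = M^*(n, f)$, ensures $f(x',v) \geq 1/M^*$, and hence $\int_{B_R} f(x',v)\,\eta \, dx' \geq (2M^*)^{-1}|B_{R/2}| \gtrsim R^{n-1}$. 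The smallness threshold $R_* = R_*(n,f)$ is precisely what is required to make the Poincar\'e-plus-Chebyshev argument quantitatively work and to guarantee that the constant $M^*$ depends only on $n$ and $f$. Combining the two bounds produces $|\lambda| \lesssim R^{n-2}/R^{n-1} = R^{-1}$, as claimed.
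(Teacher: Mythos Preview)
Your argument is correct, and in one respect it is even slightly slicker than the paper's. Both proofs test the Euler--Lagrange equation with a standard cutoff $\eta$ and estimate the gradient term $\int D_z a_K(Dv)\cdot D\eta$ in the same way, via $|D_z a_K(z)|\lesssim 1+|z|$ and the energy bound of Remark~\ref{rem_energy}. The divergence is in how to get the lower bound $\int_{B_R} f(x',v)\,\eta \gtrsim R^{n-1}$ without an a~priori $L^\infty$ estimate on $v$.

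The paper replaces $f(x',v)$ by $f(x',u)$, for which the pointwise lower bound~\eqref{local-bound} applies directly since $\|u\|_{L^\infty}\le T$, and then controls the difference using the $\gamma$-H\"older continuity of $f$ together with Jensen and Poincar\'e: $\int_{B_R}|f(x',v)-f(x',u)|\,dx' \lesssim R^{n-1+\gamma}$. This produces an extra term $c\,|\lambda|\,R^\gamma$ on the right-hand side, which is absorbed for $R\le R_*$ small enough that $cR_*^\gamma \le \tfrac12$; this is precisely where the paper's smallness threshold $R_*$ originates. Your route instead keeps $f(x',v)$ and manufactures a pointwise bound on $v$ on a set of large measure, via Poincar\'e on $v-u\in W^{1,2}_0(B_R)$ plus Chebyshev. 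This avoids any use of the H\"older regularity of $f$ (only positivity and continuity are needed), and your $R_*$ is essentially just the condition $R_*\le 1$. One minor point: your constant $M^*$ depends on $f$ and on the threshold $T^* = T^*(n,K,T)$, so strictly $M^* = M^*(n,K,T,f)$; but since the $\lesssim$ convention of the paper already hides dependence on $K$ and $M$, this is harmless.
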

	In what follows, we will tacitly  assume that $R_0\le R_*$, so that the statement of Lemma~\ref{rem-lambda} is always true.
	
	\begin{proof}
This bound could be obtained easily by testing the weak formulation of the Euler--Lagrange equation \eqref{EL_comparison} with a suitable cut-off function, \emph{if} we knew already that the minimizer is bounded uniformly in $R\le R_0$ in the sense that $\|v\|_{L^\infty(B_R)}\le T$ for some $T$, see the discussion after \eqref{local-bound}. Because we know this to be true for the original minimizer $u$ only, we have to control their difference: Using the H\"older regularity of $f$, Jensen's inequality and the Poincar\'e inequality {(for which the Poincar\'e constant scales with $R^{2}$ on the ball $B_R$)}, we observe that
\begin{align*}
\int_{B_R} |f(x',v)-f(x',u)|\, dx' & \le [f]_{C^{0,\gamma}} \int_{B_R} |v-u|^{\gamma}\, dx'\\
& \le c(n) [f]_{C^{0,\gamma}}R^{(n-1)\frac{2-\gamma}2} \left(\int_{B_R} (v-u)^2\, dx'\right)^{\gamma/2}\\
&\le c(n,[f]_{C^{0,\gamma}}) R^{(n-1)\frac{2-\gamma}2+\gamma} \left(\int_{B_R} |Du-Dv|^2\, dx'\right)^{\gamma/2}.
\end{align*}
Now, thanks to Remark \ref{rem_energy} and the Lipschitz bound \eqref{8}, the latter leads to
\begin{equation}\label{9}
\int_{B_R} |f(x',v)-f(x',u)|\, dx' \le c(n,[f]_{C^{0,\gamma}},K) R^{n-1 + \gamma}.
\end{equation}

We finally consider a cut-off function $\eta \in C_0^\infty(B_{3R/4},[0,1])$ of radial structure satisfying $\eta \equiv 1$ in $B_{R/2}$ and $\kabs{D\eta} \lesssim R^{-1}$. In this way, we find
\begin{align*}
	\kabs{\lambda} & \leq c(n,M) R^{1-n} \Babs{ \lambda \int_{B_{\frac{3R}4}} f(x',u) \eta \dx' } \\
	& \le   c(n,M) R^{1-n} \Babs{ \int_{B_{\frac{3R}4}} D_z a_K(Dv) \cdot D \eta \dx' } + c(n,[f]_{C^{0,\gamma}},K) |\lambda| R^{ \gamma} ,
	\end{align*}
	by the virtue of \eqref{local-bound}, the Euler--Lagrange equations \eqref{EL_comparison} and the previous error estimate. Hence if $R\le R_0$ is sufficiently small so that $c(n,[f]_{C^{0,\gamma}},K)R_0^{\gamma}\le 1/2$ and using $|D_z a_K(z)|\lesssim |z|$ together with Remark \ref{rem_energy}, we conclude that
	\begin{align}
\kabs{\lambda}  &  \lesssim R^{1-n} \Babs{ \int_{B_{R}} D_z a_K(Dv) \cdot D \eta \,dx'} \label{eqn_lambda_prelim} \\
  & \lesssim  R^{1-n}\int_{B_{R}}|D_z a_K(Dv)|R^{-1}\,dx'\lesssim R^{-1}, \notag
\end{align}
which is the desired estimate.
	\end{proof}

The following lemma allows to improve the bound on the Lagrange multiplier from the previous lemma if additional error estimates are available.

\begin{lemma}[Improved bound on $\lambda$]\label{L1}
Suppose that $u$ is $C^{1,\sigma}(B_R)$ and that
\begin{equation}
\label{6}
\int_{B_R}|Du - Dv|^2 \dx \lesssim R^{n-1 +2 \theta}
\end{equation}
for some $\sigma$ and $\theta\in[0,1)$, then
\begin{equation}
\label{7} 
|\lambda | \lesssim R^{\theta-1} + R^{\sigma-1}.
\end{equation}
\end{lemma}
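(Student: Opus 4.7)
The plan is to revisit the computation in Lemma~\ref{rem-lambda} and to exploit both the smallness hypothesis~\eqref{6} and the pointwise $C^{1,\sigma}$-regularity of $u$ when bounding the right-hand side of~\eqref{eqn_lambda_prelim}. Indeed, that preliminary inequality
\[
 |\lambda| \lesssim R^{1-n} \Babs{\int_{B_R} D_z a_K(Dv) \cdot D\eta \dx'}
\]
was derived without ever using~\eqref{6}; the error coming from the exchange $f(x',v)\to f(x',u)$ is absorbed into the left-hand side for $R\le R_0$ sufficiently small. So it suffices to estimate the integral on the right-hand side under the stronger hypotheses of the present lemma.

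The key observation is that, because $\eta$ has compact support in $B_R$, we have $\int_{B_R} D\eta\dx'=0$, so any constant vector can be subtracted from $D_z a_K(Dv)$ in the integrand. I would choose to subtract $D_z a_K((Du)_{B_R})$ and split
\begin{align*}
\int_{B_R} D_z a_K(Dv) \cdot D\eta \dx' = &\int_{B_R} \bigl[ D_z a_K(Dv) - D_z a_K(Du) \bigr] \cdot D\eta \dx' \\
&+ \int_{B_R} \bigl[ D_z a_K(Du) - D_z a_K\bigl((Du)_{B_R}\bigr) \bigr] \cdot D\eta \dx'.
\end{align*}
The point of this decomposition is to separate the $L^2$-smallness coming from~\eqref{6} from the pointwise oscillation bound coming from $u\in C^{1,\sigma}$.

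For the first integral I would use that $D_z a_K$ is globally Lipschitz (a consequence of $a_K$ being smooth with at most quadratic growth), so that the integrand is dominated by $|Du-Dv|\,|D\eta|$. An application of the Cauchy--Schwarz inequality, combined with~\eqref{6} and the standard estimate $\|D\eta\|_{L^2}^2 \lesssim R^{n-3}$, produces the bound $\lesssim R^{n-2+\theta}$. For the second integral I would work pointwise: the $C^{1,\sigma}$-hypothesis on $u$ gives $|Du-(Du)_{B_R}|\lesssim R^\sigma$ uniformly in $B_R$, and pairing with the $L^1$-bound $\|D\eta\|_{L^1}\lesssim R^{n-2}$ yields $\lesssim R^{n-2+\sigma}$. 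Multiplying the sum of the two contributions by $R^{1-n}$ gives exactly~\eqref{7}.

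I do not expect any real obstacle in this argument; the main conceptual step is to notice the mean-subtraction trick, which exactly decouples the two pieces of information at our disposal. The absorption argument at the very beginning is milder than in Lemma~\ref{rem-lambda}, since~\eqref{6} can only improve the error estimate~\eqref{9} used there.
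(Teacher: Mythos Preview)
Your proposal is correct and follows essentially the same route as the paper: both start from~\eqref{eqn_lambda_prelim}, subtract the constant vector $D_z a_K((Du)_{B_R})$ using $\int_{B_R} D\eta\,dx'=0$, and then split via the Lipschitz continuity of $D_z a_K$ into a piece controlled by~\eqref{6} and a piece controlled by the $C^{0,\sigma}$-oscillation of $Du$. The only cosmetic differences are that the paper applies the triangle inequality after passing to $|Dv-(Du)_{B_R}|$ (rather than splitting first at $D_z a_K(Du)$) and uses Jensen's inequality instead of Cauchy--Schwarz for the first term, but the resulting bounds are identical.
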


\begin{proof}
We take a smooth cut-off function $\eta \in C_0^\infty(B_{R},[0,1])$ satisfying $\eta \equiv 1$ in $B_{R/2}$ and $\|D \eta\|_{L^{\infty}}\lesssim R^{-1}$. We may choose this function as a test function in the Euler--Lagrange equation \eqref{EL_comparison} and find, analogously to~\eqref{eqn_lambda_prelim} in the previous proof, that 
 \begin{align*}
 |\lambda| R^{n-1} & \lesssim  \left|\int_{B_R} D_za_K(D v) \cdot D \eta\dx'\right|.
 \end{align*}
We now use the fact that $\eta$ is compactly supported {in $B_R$} to observe that $\int_{B_R}\xi \cdot D \eta\dx=0$ for any $\xi\in\R^{n-1}$. In particular, for $\xi = D_za_K((D u)_R) $, we find that
  \begin{align*}
 |\lambda| R^{n-1} &\lesssim \left| \int_{B_R}\left(D_za_K(D v) - D_za_K((D u)_R) \right)\cdot D \eta\dx'\right|\\
 & \lesssim R^{-1} \int_{B_R} \left|D_za_K(D v)  - D_za_K((D u)_R)\right|\dx'.
 \end{align*}
It is easily seen that $D_za_K$ is Lipschitz with a Lipschitz constant depending just on $K$. Using in addition the triangle inequality, we thus obtain
\[
 |\lambda| R^{n} \lesssim \int_{B_R} |D v - D u|\dx' + \int_{B_R} |D u - (D u)_R|\dx'.
 \]
 For the first term, we use  Jensen's inequality and the hypothesis \eqref{6} to bound
 \[
 \int_{B_R} |D v - D u|\dx  \lesssim R^{n-1+\theta}.
 \]
 For the second term, we use the fact that $D u$ is known to be a $\sigma$-H\"older function, and thus
 \[
 \int_{B_R} |D u - (D u)_R|\dx\lesssim R^{n-1+\sigma}.
 \]
 A combination of the previous bounds yields the statement of the lemma.
\end{proof}

Our next result is somehow complementary to the previous lemma.

\begin{lemma}[First error estimate]\label{lambda-1}
Suppose that there exists $\delta \in [0,1)$ such that
$$|\lambda|\lesssim R^{\delta - 1}.$$
Then, 
\begin{equation}\label{u-v}
	\int_{B_R} \kabs{Du - Dv}^2 \dx' \lesssim R^{ n-1 +\frac{2\alpha}{2-\alpha}} + R^{ n-1 +{ \frac{2}{1-\gamma}(\gamma +\delta)}}. 
\end{equation} 
\end{lemma}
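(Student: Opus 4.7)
The plan is to derive a coercivity estimate from the strong convexity of $a_K$ and then bound the two resulting terms using, respectively, the Euler--Lagrange equation for $v$ and the minimality of $u$. Throughout, write $X:=\int_{B_R}|Du-Dv|^2\dx'$.

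Integrating \eqref{12} over $B_R$ and inserting $\varphi=u-v\in W^{1,2}_0(B_R)$ into the Euler--Lagrange equation \eqref{EL_comparison}, I obtain
\[
\tfrac{\mu}{2}X \le \int_{B_R}[a_K(Du)-a_K(Dv)]\dx' + \lambda\int_{B_R}f(x',v)(u-v)\dx'.
\]
The proof reduces to bounding each of the two terms on the right by $\varepsilon X$ plus one of the two contributions appearing in \eqref{u-v}.

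For the Lagrange multiplier term, I exploit that $u$ and $v$ share the weighted volume \eqref{def_comp_constraint}, i.e., $\int_{B_R}\int_v^u f(x',t)\dt\dx'=0$. Writing $\int_v^u f(x',t)\dt = f(x',v)(u-v) + \int_v^u[f(x',t)-f(x',v)]\dt$ and using the $\gamma$-H\"older continuity of $f$ yields $|\int_{B_R}f(x',v)(u-v)\dx'|\lesssim \int_{B_R}|u-v|^{1+\gamma}\dx'$. H\"older's inequality with exponents $\tfrac{2}{1+\gamma},\tfrac{2}{1-\gamma}$ followed by the Poincar\'e inequality for $u-v\in W^{1,2}_0(B_R)$ bounds the right-hand side by $CR^{(n-1)(1-\gamma)/2+1+\gamma}X^{(1+\gamma)/2}$. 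Multiplying by $|\lambda|\lesssim R^{\delta-1}$ and applying Young's inequality with the conjugate exponents $\tfrac{2}{1-\gamma},\tfrac{2}{1+\gamma}$ produces the contribution $\varepsilon X + CR^{n-1+2(\gamma+\delta)/(1-\gamma)}$.

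For the energy difference, I first use $a_K(Du)=a(Du)$ (by \eqref{8}) and $a_K(Dv)\ge a(Dv)$ to pass to $\int_{B_R}[a(Du)-a(Dv)]\dx'$. The minimality of $u$ in the original weighted problem yields
\[
\int_{B_R}h(x',u)[a(Du)-a(Dv)]\dx' \le \int_{B_R}[h(x',v)-h(x',u)]a(Dv)\dx' \lesssim \int_{B_R}|u-v|^\alpha a(Dv)\dx'.
\]
To peel off the weight $h(x',u)$, I replace it by its mean $\bar h$ over $B_R$: the map $x'\mapsto h(x',u(x'))$ is $\alpha$-H\"older on $B_R$ with bounded seminorm (since $u$ is Lipschitz and $h\in C^{0,\alpha}$), giving $|h(x',u)-\bar h|\lesssim R^\alpha$; combined with the $1$-Lipschitz continuity of $a$, the arising error is at most $R^\alpha\int_{B_R}|Du-Dv|\dx'\le R^{\alpha+(n-1)/2}X^{1/2}$, which by Young is $\le \varepsilon X + CR^{n-1+2\alpha}$. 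For the right-hand side of the displayed inequality, H\"older with $p=q=2$ combined with $\int_{B_R}a(Dv)^2\dx'\lesssim R^{n-1}$ from Remark~\ref{rem_energy} and the Poincar\'e inequality give $\int_{B_R}|u-v|^\alpha a(Dv)\dx'\lesssim R^{\alpha+(n-1)(2-\alpha)/2}X^{\alpha/2}$, and Young with exponents $\tfrac{2}{\alpha},\tfrac{2}{2-\alpha}$ converts this to $\varepsilon X + CR^{n-1+2\alpha/(2-\alpha)}$. Since $\alpha<1$ and $R\le 1$ force $R^{n-1+2\alpha}\le R^{n-1+2\alpha/(2-\alpha)}$, the energy difference is bounded by $\varepsilon X + CR^{n-1+2\alpha/(2-\alpha)}$, and absorbing $\varepsilon X$ into $\tfrac{\mu}{2}X$ yields \eqref{u-v}.

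The main obstacle is this last step: naively freezing $h$ at a single base point $(0,u(0))$ would only give $|h(\cdot,u(\cdot))-h_0|\lesssim R^\alpha$ and hence a residual $R^{n-1+\alpha}$, which is strictly larger than the optimal $R^{n-1+2\alpha/(2-\alpha)}$. The decisive improvement comes from exploiting the $L^2$-smallness of $u-v$ through $|h(x',v)-h(x',u)|\lesssim|u-v|^\alpha$, which after H\"older and Young yields the correct exponent.
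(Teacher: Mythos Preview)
Your proof is correct and follows essentially the same route as the paper's: strong convexity plus the Euler--Lagrange equation yield the coercivity inequality, the Lagrange-multiplier term is handled via the shared volume constraint and the $\gamma$-H\"older continuity of $f$, and the energy difference is controlled through the minimality of $u$ together with the key observation $|h(x',v)-h(x',u)|\lesssim |u-v|^\alpha$. The only cosmetic differences are that the paper freezes $h$ at the single point value $h(0',u(0'))$ rather than at the mean $\bar h$ (both give an error $\lesssim R^\alpha\int_{B_R}|Du-Dv|\dx'$, hence $\eps X + CR^{n-1+2\alpha}$), and that the paper applies Young's inequality pointwise, keeping the auxiliary term $\eps R^{-2}\int_{B_R}|u-v|^2\dx'$ and invoking Poincar\'e only at the very end, whereas you apply H\"older and Poincar\'e first and then Young; the resulting exponents are identical.
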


\begin{proof}
Making use of the strong convexity of $a_K$, cf.~\eqref{12}, and the Euler--Lagrange equation~\eqref{EL_comparison} for $v$ (tested with $\varphi = u - v$), we find
	\begin{equation}\label{1}
		\frac{\mu}2\int_{B_R} \kabs{Du - Dv}^2 \dx' \le  \int_{B_R} \big( a_K(Du) - a_K(Dv) \big) \dx' + \lambda  \int_{B_R}  f(x',v(x')) (u(x')-v(x')) \dx' .
	\end{equation}
	Now we estimate the two integrals appearing on the right-hand side separately. We start by noticing that the first integral is non-negative due to the fact that $v$ is a minimizer. Hence, using in addition the Lipschitz bound on $u$, the definition of $a_K$ and the lower bound on $h$, we observe that
	\begin{align*}
		\frac1M \int_{B_R} \big( a_K(Du) - a_K(Dv) \big) \dx'& \le h(0',u(0')) \int_{B_R} \left(a(Du)-a(Dv)\right)\dx'\\
		& = \int_{B_R} \big( h(0', u(0')) - h(x',u(x')) \big) \big( a(Du)  - a(Dv) \big) \dx' \\
		& \quad {}+ \int_{B_R} \big( h(x', u(x')) a(Du) - h(x',v(x')) a(Dv) \big) \dx' \\
		& \quad {}+ \int_{B_R} \big(  h(x',v(x')) - h(x', u(x')) \big) a(Dv) \dx'.
	\end{align*}
	The second term on the right-hand side is non-positive because $u$ is a minimizer for the initial full weighted problem and  $v$ is an admissible competitor.
	Using now that $h$ is H\"older of order~$\alpha$, $u$ is Lipschitz with constant $K$ and $a$ is Lipschitz with constant $1$, we can further estimate
	\begin{align*}
		\mel \frac1M \int_{B_R} \big( a_K(Du) - a_K(Dv) \big) \dx'\\
		& \leq \left(1+K^{\alpha}\right) [h]_{C^{0,\alpha}} \int_{B_R}  \kabs{x'}^\alpha  \kabs{Du - Dv} \dx' 
		+ [h]_{C^{0,\alpha}} \int_{B_R} |v-u|^{\alpha} a(Dv)\dx' .
	\end{align*}
	We estimate with the help of Young's inequality:
	\begin{align*}
		\mel
		\frac{1}{M}\int_{B_R} \big( a_K(Du) - a_K(Dv) \big) \dx' \\
		& \le \eps \int_{B_R} \big( \kabs{Du - Dv}^2 + R^{-2} \kabs{u-v}^2 \big) \dx'  + C \left( \int_{B_R} |x'|^{2\alpha}\dx'+ R^{\frac{2\alpha}{2-\alpha}}\int_{B_R} a(Dv)^{\frac{2}{2-\alpha}}\dx'\right),
	\end{align*}
	where $\eps$ is some positive and small but finite constant that we will fix later and where $C=C(\eps,K,\alpha,M,[h]_{C^{0,\alpha}},n)$ is a constant that may (from here on) change from line to line. Clearly, the second term on the right-hand side is of the order $R^{n-1+2\alpha}$. Moreover, because $a(z)^p\le C(K) a_K(z)$ for any $p\in[1,2]$ and thanks to the minimality of~$v$ and the Lipschitz bound on $u$, we have for the third term that
	\[
	\int_{B_R} a(Dv)^{\frac{2}{2-\alpha}}\dx' \lesssim \int_{B_R} a_{K}(Dv)\dx'\le \int_{B_R} a_{K}(Du)\dx' \le C R^{n-1}.
	\]
	For $R\le 1$, we conclude that
	\[
	\frac{1}{M}\int_{B_R} \big( a_K(Du) - a_K(Dv) \big) \dx' \le \eps \int_{B_R} \big( \kabs{Du - Dv}^2 + R^{-2} \kabs{u-v}^2 \big) \dx' + CR^{n-1+\frac{2\alpha}{2-\alpha}}.
	\]
	
	It remains to estimate the volume constraint term in \eqref{1}. Using the assumption on the Lagrange multiplier, and the fact that $u$ and $v$ satisfy the same volume constraint, we  estimate
	\begin{align*}
		\mel | \lambda|  \Babs{ \int_{B_R}  f(x',v(x')) (u(x')-v(x')) \dx'}\\
		&\lesssim R^{\delta-1} \Babs{\int_{B_R} \int_0^{u(x')} f(x',v(x'))\dt\dx' - \int_{B_R}\int_0^{v(x')} f(x',v(x'))\dt\dx'}\\
		& = R^{\delta-1} \Babs{\int_{B_R} \int_{v(x')}^{u(x')} \left(f(x',v(x')) - f(x',t)\right)\dt\dx'}.
	\end{align*}
	We use the $\gamma$-H\"older regularity of $f$ and Young's inequality to get
	\begin{align*}
		| \lambda|  \Babs{ \int_{B_R}  f(x',v(x')) (u(x')-v(x')) \dx'} & \le C R^{\delta-1} \int_{B_R} \kabs{u(x') - v(x')}^{1 + \gamma} \dx' \\
		& \leq \eps  \int_{B_R} R^{-2} \kabs{u-v}^2 \dx' + C R^{ n-1 + { \frac{2}{1-\gamma}(\gamma +\delta)}}.
	\end{align*} 
	
	We have now a bound on both terms appearing on the right-hand side of \eqref{1}. Therefore,
	\[
	\frac{\mu}2 \int_{B_R} \kabs{Du - Dv}^2 \dx' \le 2\eps \int_{B_R} \big( \kabs{Du - Dv}^2 + R^{-2} \kabs{u-v}^2 \big) \dx' + CR^{n-1+\frac{2\alpha}{2-\alpha}}+C R^{ n-1+ { \frac{2}{1-\gamma}(\gamma +\delta)}}.
	\]
	Hence, via Poincar\'e's inequality and a suitable (small) choice of $\eps$, we end up with \eqref{u-v}.
\end{proof}

Iterating Lemma \ref{lambda-1} and \ref{L1}, we get the following

\begin{corollary}[Improved error estimate]\label{Du-Dv}
Let $u\in C^{1,\sigma}(B_R)$ with $\sigma \leq \tfrac{\alpha}{2-\alpha}$, then we have that
\begin{equation}\label{lambda}|\lambda|\lesssim R^{\sigma-1}\end{equation}
and 
\begin{equation}\label{opt}\int_{B_R}|Du-Dv|^2\,dx'\lesssim R^{n-1+\frac{2\alpha}{2-\alpha}}+ R^{n-1+ { \frac{2}{1-\gamma}(\gamma+\sigma)}}.
	\end{equation}
\end{corollary}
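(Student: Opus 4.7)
The strategy is a finite bootstrap that alternates between Lemmas \ref{lambda-1} and \ref{L1}, starting from the crude bound $|\lambda|\lesssim R^{-1}$ provided by Lemma \ref{rem-lambda} (i.e.\ the case $\delta_0=0$ of Lemma \ref{lambda-1}). Feeding $\delta_0=0$ into Lemma \ref{lambda-1} gives
\[
\int_{B_R}|Du-Dv|^2\,dx' \lesssim R^{n-1+\frac{2\alpha}{2-\alpha}}+R^{n-1+\frac{2\gamma}{1-\gamma}},
\]
which by $R\le R_0\le 1$ amounts to the rate $\theta_1=\min\{\alpha/(2-\alpha),\gamma/(1-\gamma)\}$. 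Inserting this into Lemma \ref{L1} yields $|\lambda|\lesssim R^{\theta_1-1}+R^{\sigma-1}$, i.e.\ the next Lagrange-multiplier exponent $\delta_1=\min\{\theta_1,\sigma\}$.

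I would formalise the iteration via the recursion (using the hypothesis $\sigma\le\alpha/(2-\alpha)$, which absorbs the $\alpha/(2-\alpha)$-term into $\sigma$ whenever the latter is smaller):
\[
\delta_{k+1}=\min\!\left\{\sigma,\ \frac{\gamma+\delta_k}{1-\gamma}\right\}.
\]
The key observation is that, as long as $\delta_k<\sigma$, one has
\[
\delta_{k+1}-\delta_k=\frac{\gamma(1+\delta_k)}{1-\gamma}\ \ge\ \frac{\gamma}{1-\gamma}>0,
\]
so the sequence grows by a fixed positive amount at each step and therefore reaches the cap $\delta_k=\sigma$ after at most $\lceil\sigma(1-\gamma)/\gamma\rceil$ iterations. (All $\delta_k$ remain in $[0,1)$ since $\sigma\le\alpha/(2-\alpha)<1$, so both lemmas are legitimately applicable at every stage.) This establishes the Lagrange-multiplier bound $|\lambda|\lesssim R^{\sigma-1}$, i.e.\ \eqref{lambda}.

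Finally, a single further application of Lemma \ref{lambda-1} with this sharpened value $\delta=\sigma$ delivers the claimed $L^2$-estimate \eqref{opt}, namely
\[
\int_{B_R}|Du-Dv|^2\,dx'\lesssim R^{n-1+\frac{2\alpha}{2-\alpha}}+R^{n-1+\frac{2}{1-\gamma}(\gamma+\sigma)}.
\]
There is no real obstacle here, only bookkeeping: the only thing to verify carefully is that the recursion strictly increases $\delta_k$ by a fixed amount until saturation, so that the bootstrap terminates after finitely many steps and the implicit constants depend only on $n$, $K$, $M$, $[f]_{C^{0,\gamma}}$, $[h]_{C^{0,\alpha}}$ (and on $\gamma$, $\sigma$, $\alpha$), but not on $R$.
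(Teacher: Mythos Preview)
Your proposal is correct and follows essentially the same bootstrap as the paper: start from Lemma~\ref{rem-lambda}, alternate Lemmas~\ref{lambda-1} and~\ref{L1}, and observe that the Lagrange-multiplier exponent increases by a fixed positive amount until it saturates at~$\sigma$, after which one final application of Lemma~\ref{lambda-1} gives~\eqref{opt}. The only cosmetic difference is that the paper invokes the standing smallness assumption~\eqref{2} on~$\gamma$ to identify the dominant term at the first step, whereas you keep track of both terms via the explicit recursion $\delta_{k+1}=\min\{\sigma,(\gamma+\delta_k)/(1-\gamma)\}$; either way the argument terminates in finitely many steps with constants independent of~$R$.
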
 
\begin{proof}
	By Lemma~\ref{rem-lambda} we know that $|\lambda|\lesssim R^{-1}$, hence we can apply Lemma \ref{lambda-1} with $\delta=0$ and deduce that
	$$\int_{B_R}|Du-Dv|^2\lesssim R^{n-1+\frac{2\alpha}{2-\alpha}}+ R^{n-1+ \frac{2\gamma}{1-\gamma}}.$$
	Under the assumption \eqref{2}, the second term on the right-hand side is the leading order term for $R\le 1$.	Hence, applying Lemma \ref{L1} (with $\theta=\frac{\gamma}{1-\gamma}$), we deduce that 
		$$|\lambda|\lesssim R^{\sigma-1} + R^{\frac{\gamma}{1-\gamma}-1}.$$
		If $\frac{\gamma}{1-\gamma}\ge \sigma$, the first bound \eqref{lambda} is proved and \eqref{opt} follows directly from Lemma \ref{lambda-1}.
		Otherwise, we can iterate the above procedure and after a finite number of steps we will reach the estimate 
		$$|\lambda|\lesssim R^{\sigma-1},$$ which, again, will imply, by using Lemma \ref{lambda-1}, the desired estimate \eqref{opt}.
		\end{proof}
	


%

%
%

%
%
%

\section{Decay estimates for the comparison problem}

In this section, we establish decay estimates for the solution $v$ of the comparison problem. In getting such estimates, the bound on the Lagrange multiplier $\lambda$ obtained in the previous section will play a crucial role.

We start, however, with first (suboptimal) H\"older estimates for the minimizer of the comparison problem.

\begin{lemma}\label{regu-v}
	Let $v \in u + W^{1,2}_0(B_R)$ be a minimizer for the functional~\eqref{def_comp_functional} under the constraint~\eqref{def_comp_constraint} in the above setting.
	Then $v \in W^{2,q}_{\text{loc}}(B_R)$ for any $q\in(1,\infty)$, and thus, in particular, it holds $v \in C^{1,\omega}(B_{\frac{R}2})$ for any $\omega \in (0,1)$. Moreover, there are the estimates
	\[
	\|Dv\|_{L^{\infty}(B_{\frac{R}2})} \lesssim1 \quad \mbox{and}\quad [Dv]_{C^{0,\omega}(B_{\frac{R}2})} \lesssim R^{-\omega}.
	\]
	\end{lemma}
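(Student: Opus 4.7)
The plan is to reduce to a fixed scale by rescaling, apply classical interior regularity for quasilinear elliptic equations, and transfer the estimates back. The Euler--Lagrange equation \eqref{EL_comparison} has smooth strongly convex principal part (cf.~\eqref{13}) and a right-hand side controlled in $L^\infty$ by $|\lambda|M\lesssim R^{-1}$ thanks to Lemma~\ref{rem-lambda}, so this is essentially a textbook situation once the $R$-dependence of the constants is pinned down.

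First I would rescale to unit size. Pick $c\in\R$ (for instance $c=u(x_0')$ for some fixed $x_0'\in\partial B_R$) and set
\[
\tilde v(y) := R^{-1}\bigl(v(Ry)-c\bigr),\qquad \tilde u(y) := R^{-1}\bigl(u(Ry)-c\bigr),\qquad y\in B_1,
\]
so that $D\tilde v(y)=Dv(Ry)$ and $\|D\tilde u\|_{L^\infty(B_1)}\le K$. A direct change of variables in \eqref{EL_comparison} shows that $\tilde v\in \tilde u + W^{1,2}_0(B_1)$ satisfies
\[
\Div_y D_z a_K(D\tilde v)=\tilde\lambda\,\tilde f(y,\tilde v)\quad\text{in }B_1,\qquad \tilde v=\tilde u\text{ on }\partial B_1,
\]
with $\tilde\lambda := R\lambda$ and $\tilde f(y,s) := f(Ry,c+Rs)$. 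By Lemma~\ref{rem-lambda} we have $|\tilde\lambda|\lesssim 1$, while $\tilde f$ inherits the bounds \eqref{local-bound} of $f$. A weak maximum principle comparing $\tilde v$ to $\tilde u$ plus affine barriers (exploiting the monotonicity of $D_za_K$ and the bounded source) then gives $\|\tilde v\|_{L^\infty(B_1)}\lesssim 1$ uniformly in $R\le R_0$.

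Second, I would invoke the classical gradient $L^\infty$ estimate for quasilinear elliptic equations (Ladyzhenskaya--Uraltseva, Lieberman) applied to $\tilde v$: since the principal part is smooth and uniformly elliptic with constants independent of $R$, the source is bounded, the boundary data is Lipschitz, and $\tilde v$ itself is bounded, one obtains $\|D\tilde v\|_{L^\infty(B_{3/4})}\lesssim 1$. Once the gradient is bounded, $D^2_z a_K(D\tilde v)$ is a continuous uniformly elliptic tensor with bounds depending only on $K$, and the equation becomes a linear uniformly elliptic equation with bounded right-hand side. Calder\'on--Zygmund $L^p$ theory then yields $\tilde v\in W^{2,q}_{\mathrm{loc}}(B_{3/4})$ for every $q\in(1,\infty)$ with $\|\tilde v\|_{W^{2,q}(B_{1/2})}\lesssim 1$, and Sobolev embedding gives $\tilde v\in C^{1,\omega}(B_{1/2})$ with $[D\tilde v]_{C^{0,\omega}(B_{1/2})}\lesssim 1$ for every $\omega\in(0,1)$.

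Finally, undoing the rescaling via $Dv(x)=D\tilde v(x/R)$ yields $\|Dv\|_{L^\infty(B_{R/2})}\lesssim 1$ and $[Dv]_{C^{0,\omega}(B_{R/2})}=R^{-\omega}[D\tilde v]_{C^{0,\omega}(B_{1/2})}\lesssim R^{-\omega}$, and the same procedure applied at any interior ball gives the full $W^{2,q}_{\mathrm{loc}}(B_R)$ statement. The only non-routine step is the initial gradient bound for $\tilde v$, which is the main obstacle; this is a standard result for smooth strongly convex principal parts, but it crucially requires all three inputs---the $L^\infty$ bound on $\tilde v$, the Lipschitz boundary data $\tilde u$, and the $L^\infty$ bound on the source $\tilde\lambda\tilde f$---to be uniform in $R$, which is precisely what Lemma~\ref{rem-lambda} ensures.
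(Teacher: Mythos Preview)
Your overall strategy---rescale to unit size, establish an $L^\infty$ bound on the rescaled minimizer, invoke quasilinear interior gradient estimates, then apply Calder\'on--Zygmund---is a valid route to the statement, but it differs from the paper's argument. The paper never establishes (or uses) an $L^\infty$ bound on $v$; instead it rewrites the Euler--Lagrange equation in non-divergence form, localizes with a cut-off, and bootstraps via $L^p$ theory for non-divergence equations (Krylov) and Sobolev embedding. The inhomogeneity $\lambda f(\cdot,v)$ is controlled in $L^p$ at each stage through the H\"older trick $|f(x',v)|\le |f(x',u)|+[f]_{C^{0,\gamma}}|v-u|^\gamma$ together with the energy estimate (Remark~\ref{rem_energy}) and Poincar\'e, so the paper sidesteps exactly the sup bound that your argument hinges on. Your approach is more direct once the $L^\infty$ bound is available; the paper's bootstrap is more hands-on but stays entirely within an integral framework.

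There is, however, a circularity in your second step. The bound~\eqref{local-bound} holds only for $(x',t)\in B_R\times(-T,T)$ with $T$ tied to $\|u\|_{L^\infty}$, so the assertion that ``$\tilde f$ inherits the bounds~\eqref{local-bound}'' already presupposes $\|\tilde v\|_{L^\infty}\lesssim 1$, which is precisely what that step is meant to prove. As written, the source $\tilde\lambda\tilde f(\cdot,\tilde v)$ in your barrier argument is not yet known to be bounded, and the comparison does not close. This can be repaired: the global assumption $f\in C^{0,\gamma}(\R^n)$ gives the growth bound $|\tilde f(y,s)|\lesssim 1+R^\gamma|s|^\gamma$, which is sublinear in $s$; combined with the quadratic growth of $a_K$ at infinity and the uniform $W^{1,2}$ bound from Remark~\ref{rem_energy}, a De~Giorgi iteration (or a barrier calibrated to the sublinear source) yields $\|\tilde v\|_{L^\infty(B_1)}\lesssim 1$. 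The fix is routine, but your write-up needs it, and it is worth noting that the paper's approach avoids the issue entirely.
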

	
	The proof of the statement follows from standard elliptic theory. We provide it for the convenience of the reader.
	
	\begin{proof}
	We start by rewriting the Euler--Lagrange equation as
	\[
	-D_z^2 a_K(D v):D^2v + \lambda f(x',v)=0\quad \mbox{in }B_R,
	\]
	and we recall that $A \coloneqq D_z^2 a_K(D v)$ is a uniformly elliptic matrix by the virtue of \eqref{13}. By rescaling
	\[
	x'=R\hat x', \quad v(x')=R\hat v(\hat x'),\quad \lambda = R^{-1} \hat \lambda,\quad f(x',v) = \hat f(\hat x',\hat v), \quad A(x') = \hat A(\hat x'),
	\]
	observing that $[\hat f]_{C^{0,\gamma}}   =R^{\gamma} [f]_{C^{0,\gamma}} \lesssim 1$  because $R\le1$, and invoking Remark~\ref{rem_energy} and {Lemma}~\ref{rem-lambda}, it is enough to consider the case $R=1$.
	
	We introduce a cut-off function $\eta$ whose  support is compactly contained in $B_1$. Smuggling this function into the elliptic equation leads to considering 
	\begin{equation}
	\label{5}
	-A:D^2 w +  w  = - 2A:D\eta\otimes Dv - A:(D^2\eta) v + \eta v - \lambda \eta f(\cdot,v)\quad \mbox{in }\R^n.
	\end{equation}
	By the assumptions of the lemma and recalling that $f$ is H\"older continuous,  arguing similarly as in the proof of Lemma \ref{rem-lambda}, we see that the right-hand side belongs to $L^2(\R^n)$, and thus, by standard theory for elliptic equations, e.g.,~Theorem 5.1.1 in \cite{Krylov_Sob}, there exists a unique solution $w$, which must coincide with $\eta v$ by construction, and that solution belongs to $W^{2,2}(\R^n)$. Moreover, thanks to Remark~\ref{rem_energy} and {Lemma}~\ref{rem-lambda} and Poincar\'e's inequality,  we have the estimate
	\[
	\|D^2 w\|_{L^2} + \|w\|_{L^2} \lesssim 1,
	\]
	by inspection of the right-hand side.
	Invoking the Sobolev embedding theorem and recalling that $w$ is compactly supported, we deduce that $w\in W^{1,q}(\R^n)$ for any $q\in [1,\frac{2n}{n-2})$ for $n \geq 3$ and any $q \in [1,\infty)$ for $n=2$, and thus, the right-hand side of \eqref{5} must be in $L^q(\R^n)$ with norm estimate
	\[
	\|D w\|_{L^q} +  \|w\|_{L^q} \lesssim 1.
	\]
	This procedure can be repeated and we deduce that $w\in W^{2,q}(\R^n)$ for any $q\in(1,\infty)$ eventually. We finally make use of the Sobolev embedding into H\"older spaces to conclude that $w\in C^{1,{\omega}}(\R^n)$ for any ${ \omega}\in(0,1)$. Choosing $\eta$ appropriately gives the  statement of the lemma.
	\end{proof}

The following Proposition is the main result of this Section: it establishes decay estimates for the oscillation of $\partial_i v$, being $v$ the solution of the comparison problem introduced in the previous Section.
\begin{proposition}\label{decay-comparison}
	Let $u\in C^{1,\sigma}(B_R)$ with $\sigma \leq \tfrac{\alpha}{2-\alpha}$ and let $v \in u + W^{1,2}_0(B_R)$ be a solution of \eqref{EL_comparison}, under the volume constraint  \eqref{def_comp_constraint}. Then there exists $ \rho_0>0$ of the form $ \rho_0=\eps_0 R$ \textup{(}with $\eps_0$ depending only on $n,\,K,\,M,\,\alpha,\,\gamma, \, [f]_{C^{0,\gamma}}$ and $[h]_{C^{0,\alpha}}$\textup{)} such that $B_{\rho_0}(x_0') \subset B_{R/4}$, and for all $0<r<\rho \le \rho_0$, we have
	\begin{equation}\label{decay-v}
		\int_{B_r(x_0')}|\partial_i v -(\partial_i v)_r|^2\dx' \lesssim \left(\frac{r}{\rho}\right)^{{n-1+2(\gamma+\sigma)}}\int_{B_\rho(x_0')}|\partial_i v -(\partial_i v)_\rho|^2\dx' + { r^{n-1+2(\gamma+\sigma)}},
		\end{equation}
	for any $i=1,\dots,n-1$.
\end{proposition}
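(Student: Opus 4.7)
The plan is to run a frozen-coefficient comparison around an arbitrary point $x_0'$ and then feed the result into the iteration Lemma~\ref{lemma_iteration}. Concretely, I would fix $\rho_0 = \varepsilon_0 R$ with $\varepsilon_0$ so small that $B_{\rho_0}(x_0') \subset B_{R/4}$, freeze $f_0 := f(x_0', v(x_0'))$, and let $\tilde v$ denote the unique minimizer of $w \mapsto \int_{B_\rho(x_0')} a_K(Dw)\,dx' + \lambda f_0 \int_{B_\rho(x_0')} w\,dx'$ over $v + W^{1,2}_0(B_\rho(x_0'))$, which solves the frozen Euler--Lagrange equation $-\Div D_z a_K(D\tilde v) = -\lambda f_0$ in $B_\rho(x_0')$ with $\tilde v = v$ on the boundary. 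Since $a_K$ is smooth, uniformly elliptic by \eqref{13}, and the right-hand side is a constant, the very same bootstrap used in Lemma~\ref{regu-v} applies to $\tilde v$: differentiating the equation yields a linear uniformly elliptic equation with H\"older coefficients for $\partial_i \tilde v$, to which Schauder theory applies. This produces the standard Campanato decay
\begin{equation*}
\int_{B_r(x_0')} |\partial_i \tilde v - (\partial_i \tilde v)_r|^2 \,dx' \lesssim \Big(\tfrac{r}{\rho}\Big)^{n-1+2\beta} \int_{B_\rho(x_0')} |\partial_i \tilde v - (\partial_i \tilde v)_\rho|^2 \,dx'
\end{equation*}
for every $r \leq \rho$ and every $\beta \in (0,1)$, with constants depending on $\beta$.

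Next I would estimate the comparison error. Subtracting the Euler--Lagrange equation~\eqref{EL_comparison} for $v$ from the frozen one for $\tilde v$ and testing with $v - \tilde v \in W^{1,2}_0(B_\rho(x_0'))$, the strong convexity \eqref{12} of $a_K$ gives
\begin{equation*}
\mu \int_{B_\rho(x_0')} |Dv - D\tilde v|^2 \,dx' \leq |\lambda| \int_{B_\rho(x_0')} |f(x',v) - f_0|\,|v - \tilde v|\,dx'.
\end{equation*}
The $\gamma$-H\"older regularity of $f$ together with the Lipschitz bound on $v$ from Lemma~\ref{regu-v} yields $|f(x',v(x')) - f_0| \lesssim \rho^\gamma$ on $B_\rho(x_0')$. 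Combining Cauchy--Schwarz with the Poincar\'e inequality on $v - \tilde v$ and absorbing one factor of $\|Dv - D\tilde v\|_{L^2}$ delivers $\int_{B_\rho(x_0')} |Dv - D\tilde v|^2 \lesssim |\lambda|^2 \rho^{n+1+2\gamma}$. Invoking Corollary~\ref{Du-Dv} gives $|\lambda| \lesssim R^{\sigma-1}$; since $\sigma - 1 < 0$ and $\rho \leq R$, the monotonicity $R^{\sigma-1} \leq \rho^{\sigma-1}$ upgrades this to the crucial error bound
\begin{equation*}
\int_{B_\rho(x_0')} |Dv - D\tilde v|^2 \,dx' \lesssim \rho^{n-1+2(\gamma+\sigma)}.
\end{equation*}

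Finally, I would combine the two estimates by the $L^2$-minimality of the mean and one triangle inequality: setting $\phi(r) := \int_{B_r(x_0')} |\partial_i v - (\partial_i v)_r|^2 \,dx'$, the standard Campanato argument yields
\begin{equation*}
\phi(r) \lesssim \Big(\tfrac{r}{\rho}\Big)^{n-1+2\beta} \phi(\rho) + \rho^{n-1+2(\gamma+\sigma)}
\end{equation*}
for all $r \leq \rho \leq \rho_0$. Because $\sigma \leq \alpha/(2-\alpha)$ and the smallness assumption \eqref{2} on $\gamma$ together force $\gamma + \sigma < 1$, I can pick $\beta \in (\gamma+\sigma, 1)$, so that $\alpha_1 := n-1+2\beta > \alpha_2 := n-1+2(\gamma+\sigma)$, and Lemma~\ref{lemma_iteration} (with $\varepsilon = 0$) converts the above into exactly the claimed decay~\eqref{decay-v}. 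The main obstacle is the quantitative calibration in the error step: the Lagrange-multiplier scaling $R^{\sigma-1}$ must be translated to the scale $\rho^{\sigma-1}$, which works precisely because $\sigma < 1$, and one needs a strict gap $\gamma + \sigma < 1$ so that the iteration lemma's hypothesis $\alpha_1 > \alpha_2$ is satisfied; both are secured by the a priori regularity $\sigma \leq \alpha/(2-\alpha)$ and the calibration \eqref{2} of $\gamma$.
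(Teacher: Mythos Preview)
Your approach is correct and takes a genuinely different route from the paper's. The paper differentiates the Euler--Lagrange equation for $v$ and freezes \emph{both} the coefficients $A(Dv)\to A_0:=A((Dv)_\rho)$ and the right-hand side $f\to f_0$ at the \emph{linear} level: it introduces an auxiliary $w$ solving $-\Div(A_0 Dw)=-\lambda\,\partial_i(f-f_0)$ with $w=\partial_i v$ on $\partial B_\rho$ (Lemma~\ref{decay-w}), obtains the constant-coefficient decay for $w$, and then controls $\partial_i v-w$ through the coefficient oscillation $|A(Dv)-A_0|\lesssim(\rho/R)^\omega$ coming from Lemma~\ref{regu-v}. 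That produces an extra $(\rho/R)^{2\omega}$ term which is absorbed as the $\varepsilon$ in Lemma~\ref{lemma_iteration}, and a separate Caccioppoli inequality is needed to pass from $|D\partial_i v|^2$ back to the oscillation of $\partial_i v$. Your nonlinear freezing (only of $f$) avoids both the coefficient-error term and the Caccioppoli step, and you enter the iteration lemma with $\varepsilon=0$; this is cleaner.

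The price is that your Campanato decay for $\tilde v$ is not quite as immediate as ``Schauder theory applies'' suggests, and you should spell out why the constant is independent of $\rho$. Concretely: the analogue of Lemma~\ref{regu-v} applied to $\tilde v$ on $B_\rho$ (the rescaled right-hand side satisfies $|\rho\lambda f_0|\lesssim 1$) gives $\|D\tilde v\|_{L^\infty(B_{\rho/2})}\lesssim 1$ and $[D\tilde v]_{C^{0,\omega}(B_{\rho/2})}\lesssim\rho^{-\omega}$, so after rescaling the coefficients $A(D\tilde v)$ of the linearized equation have a uniformly bounded $C^{0,\omega}$ seminorm. Since $\partial_i\tilde v-(\partial_i\tilde v)_\rho$ solves the same linear homogeneous equation, the De Giorgi $L^\infty$--$L^2$ bound followed by interior Schauder yields $\|D\partial_i\tilde v\|_{L^\infty(B_{\rho/4})}\lesssim\rho^{-(n+1)/2}\|\partial_i\tilde v-(\partial_i\tilde v)_\rho\|_{L^2(B_\rho)}$, which gives your decay with exponent $n+1>n-1+2\beta$. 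With this made explicit, your argument is complete.
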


In order to prove the previous result, we will consider the equation satisfied by the derivatives of $v$ (in the weak sense): 
\begin{equation}\label{eq-dv}
	\Div \big( D^2_z a_K(Dv) D \partial_i v \big) = \lambda \partial_i \left( f(x',v)\right)
	\end{equation}
	for every $i \in \{1,\ldots,n-1\}$. Thus, each of the functions $\partial_i v$ solves an equation with measurable, elliptic (cf.~\eqref{13}) and bounded coefficients (given by $D^2_z a_K(Dv)$), where the inhomogeneity is the ``derivative'' of a function in $L^2(B_R)$. 
	In order to get the desired decay estimates for $\partial_i v$, we need some intermediate decay estimates for the solutions to an associate problem with constant coefficients.

	Let $\rho>0$ be such that $B_\rho(x_0')\subset B_{R/4}$ and let $A(Dv)$ denote the matrix $D^2_z a_K(Dv)$ and $A_0$ the constant matrix obtained by freezing the coefficients in $B_{\rho}(x_0')$, more precisely, $A_0=A((Dv)_{B_\rho(x_0')})$. Moreover, set $f_0=f(x_0',(v)_{B_\rho(x_0')})$. With these notations, we have the following result.
	\begin{lemma}\label{decay-w}
	Let $u\in C^{1,\sigma}(B_R)$ with $\sigma \leq \tfrac{\alpha}{2-\alpha}$ and let $w \in \partial_i v + W^{1,2}_0(B_\rho(x_0'))$ be a weak solution of the linear elliptic Dirichlet problem 
	\begin{equation*}
		-\Div (A_0 Dw)= - \lambda \partial_i (f(x',v)-f_0)\quad \mbox{in } B_\rho(x_0').
		\end{equation*}	
	Then, for any $0<r<\rho$ we have:
	\begin{equation}\label{decay-est-w}
		\int_{B_r(x_0')}|Dw|^2\dx' \lesssim \left(\frac{r}{\rho}\right)^{n-1}\int_{B_\rho(x_0')} |Dw|^2 \dx'+ R^{2(\sigma -1)}\rho^{n-1+{ 2\gamma}}.
		\end{equation}
		\end{lemma}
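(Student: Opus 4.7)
The plan is the classical Schauder-type comparison/freezing argument adapted to our setting. I will decompose $w = w_h + (w - w_h)$, where $w_h \in W^{1,2}(B_\rho(x_0'))$ denotes the $A_0$-harmonic extension of $w$, i.e.\ the unique solution of
\begin{equation*}
-\Div(A_0 Dw_h) = 0 \quad \text{in } B_\rho(x_0'), \qquad w_h - w \in W^{1,2}_0(B_\rho(x_0')).
\end{equation*}
Since $A_0$ is constant and uniformly elliptic by~\eqref{13}, the function $w_h$ is smooth in the interior, and the classical Caccioppoli/mean-value type bounds for constant-coefficient elliptic equations yield the decay
\begin{equation*}
\int_{B_r(x_0')} |Dw_h|^2 \dx' \lesssim \Bigl(\frac{r}{\rho}\Bigr)^{n-1} \int_{B_\rho(x_0')} |Dw_h|^2 \dx'
\end{equation*}
for all $0 < r \leq \rho$ (the case $r \geq \rho/2$ being trivial, the case $r < \rho/2$ following from $\|Dw_h\|_{L^\infty(B_{\rho/2})}^2 \lesssim \rho^{1-n}\int_{B_\rho}|Dw_h|^2\dx'$).

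Next I estimate the remainder $\psi \coloneqq w - w_h \in W^{1,2}_0(B_\rho(x_0'))$. Testing the weak formulations of the equations for $w$ and $w_h$ with $\psi$ and subtracting, the right-hand side of the equation for $w$ is $-\lambda\partial_i(f(\cdot,v)-f_0)$ in the distributional sense, so integration by parts gives
\begin{equation*}
\int_{B_\rho(x_0')} A_0 D\psi \cdot D\psi \dx' = \lambda \int_{B_\rho(x_0')} \bigl(f(x',v)-f_0\bigr) \partial_i \psi \dx'.
\end{equation*}
Using the ellipticity bound~\eqref{13} on the left, Cauchy--Schwarz and Young on the right, and finally the improved bound $|\lambda|\lesssim R^{\sigma-1}$ from Corollary~\ref{Du-Dv}, I obtain
\begin{equation*}
\int_{B_\rho(x_0')} |D\psi|^2 \dx' \lesssim R^{2(\sigma-1)} \int_{B_\rho(x_0')} \bigl|f(x',v)-f_0\bigr|^2 \dx'.
\end{equation*}

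The remaining key step is to bound $\int_{B_\rho(x_0')}|f(x',v)-f_0|^2\dx'$ by $\rho^{n-1+2\gamma}$. Using the $\gamma$-H\"older continuity of $f$ jointly in $(x',t)$ together with the subadditivity inequality $(a^2+b^2)^{\gamma/2}\le a^\gamma+b^\gamma$, we have
\begin{equation*}
|f(x',v(x'))-f_0|^2 \lesssim |x'-x_0'|^{2\gamma} + |v(x')-(v)_{B_\rho(x_0')}|^{2\gamma}.
\end{equation*}
The first contribution directly yields $\rho^{n-1+2\gamma}$ upon integration. For the second, Jensen's inequality and Poincar\'e's inequality reduce the task to bounding $\int_{B_\rho(x_0')}|Dv|^2\dx'$, which by the uniform Lipschitz estimate of Lemma~\ref{regu-v} is $\lesssim \rho^{n-1}$; collecting exponents one again lands on $\rho^{n-1+2\gamma}$. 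Hence $\int_{B_\rho(x_0')}|D\psi|^2\dx' \lesssim R^{2(\sigma-1)}\rho^{n-1+2\gamma}$.

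Finally, combining the two pieces via the triangle inequality $|Dw|^2 \le 2|Dw_h|^2+2|D\psi|^2$ on $B_r(x_0')$, and estimating $\int_{B_\rho(x_0')}|Dw_h|^2\dx' \le 2\int_{B_\rho(x_0')}|Dw|^2\dx' + 2\int_{B_\rho(x_0')}|D\psi|^2\dx'$, yields exactly~\eqref{decay-est-w}. The main obstacle I anticipate is the second H\"older term $|v-(v)_{B_\rho}|^{2\gamma}$: it forces the use of the $W^{1,\infty}$ bound for $v$ from Lemma~\ref{regu-v} (whose proof in turn relied on the Lagrange-multiplier bound of Lemma~\ref{rem-lambda}), so the order of the preparatory results is essential and should be double-checked. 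Everything else is standard constant-coefficient Schauder machinery.
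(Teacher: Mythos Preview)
Your proof is correct and follows essentially the same route as the paper: decompose $w$ into an $A_0$-harmonic part (the paper calls it $\psi$, you call it $w_h$) plus a remainder in $W^{1,2}_0$ (the paper's $\phi$, your $\psi$), use the constant-coefficient decay for the harmonic part, and test the remainder equation against itself to get the $R^{2(\sigma-1)}\rho^{n-1+2\gamma}$ term via the $\gamma$-H\"older continuity of $f$ and the $\lambda$-bound from Corollary~\ref{Du-Dv}. The only cosmetic difference is that the paper handles the term $|v-(v)_{B_\rho}|^{2\gamma}$ directly from the Lipschitz bound $|v(x')-(v)_{B_\rho}|\lesssim\rho$ (Lemma~\ref{regu-v}), whereas you route it through Jensen and Poincar\'e; both lead to the same $\rho^{n-1+2\gamma}$. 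Your concern about the order of preparatory results is unfounded: Lemma~\ref{regu-v} only uses the crude bound of Lemma~\ref{rem-lambda}, so there is no circularity.
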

\begin{proof}
	
For ease of notation, we do not write explicitly the center of the balls. Hence in the following computations $B_\rho$ and $B_r$ stand for $B_\rho(x_0')$ and $B_r(x_0')$, respectively. 
	
	We write $w=\psi + \phi$ where $\psi$ is the weak solution of the corresponding homogeneous problem
	\begin{equation*}
		\begin{cases}
			-\text{div}(A_0 D \psi)=0 & \mbox{in } B_\rho,\\
			\psi =\partial_i v & \mbox{on } \partial B_\rho,
		\end{cases}
	\end{equation*}
and $\phi$ satisfies the {inhomogeneous} problem
		\begin{equation}\label{problem-phi}
		\begin{cases}
			-\text{div}(A_0 D \phi)=\lambda \partial_i (f(x',v)-f_0)& \mbox{in } B_\rho,\\
			\phi =0 & \mbox{on } \partial B_\rho.
		\end{cases}
	\end{equation}
By standard decay estimates for the homogeneous equation with constant coefficients (see, e.g., Lemma 2.17 in \cite{ACM}), we have 
\begin{equation}\label{hom}\int_{B_r}|D\psi|^2 \, dx'\lesssim \left(  \frac{r}{\rho}\right)^{n-1}\int_{B_{\rho}} |D \psi|^2\, dx',
	\end{equation}
	for any $0<r<\rho$.
	
	We consider now the {inhomogeneous} problem satisfied by $\phi$ and test it with $\phi$ itself, to get in a first step by employing the ellipticity of~$A_0$ in~\eqref{13}, integrating by parts and using Young's inequality
	\begin{equation*}
	 \mu \int_{B_{\rho}} |D\phi|^2 \dx' \leq \frac{\lambda^2}{2\mu} \int_{B_\rho} |f(x',v)-f(x_0',(v)_\rho)|^2 \dx' + \frac{\mu}{2} \int_{B_{\rho}}|D\phi|^2\dx'.
	\end{equation*}
	Then, after absorbing the second integral on the right-hand side in the left-hand side, we find
	\begin{equation}\label{nonhom}
	\begin{aligned}
			\int_{B_{\rho}}|D\phi|^2\dx' & \lesssim \lambda^2 \int_{B_\rho} |f(x',v)-f(x_0',(v)_\rho)|^2 \dx'\\
			&\lesssim  R^{2(\sigma-1)}\int_{B_{\rho}} \left(|x'-x_0'|^{2\gamma} + |v-(v)_{\rho}|^{2\gamma}\right)\, dx'\\
			&\lesssim R^{2(\sigma-1)} \rho^{n-1+2\gamma},
		\end{aligned}
		\end{equation}
	where we have used that $f\in C^{0,\gamma}$,  that $v$ is Lipschitz with a Lipschitz constant that does not depend on $R$, see Lemma \ref{regu-v}, and applied the bound on $\lambda$ given in Corollary \ref{Du-Dv}. 
	
	Finally, we combine \eqref{hom} and \eqref{nonhom} {(and add $\pm D\phi$)}, to get
	\begin{equation*}
		\begin{split}
			\int_{B_r}|Dw|^2\dx'& \lesssim \int_{B_r}|D\psi|^2\dx' + \int_{B_r}|D\phi|^2\dx'\\
			&\lesssim\left(\frac{r}{\rho} \right)^{n-1} \int_{B_\rho}|D\psi|^2   \dx'+ {\ R^{2(\sigma-1)}\rho^{n-1+2\gamma}}\\
			&\lesssim \left(\frac{r}{\rho} \right)^{n-1}\int_{B_\rho}|Dw|^2 \dx'+ \left(\frac{r}{\rho} \right)^{n-1}\int_{B_\rho}|D\phi|^2 \dx'+ { R^{2(\sigma-1)}\rho^{n-1+2\gamma}} \\
			&\lesssim  \left(\frac{r}{\rho} \right)^{n-1}\int_{B_\rho}|Dw|^2\dx' + { R^{2(\sigma-1)}\rho^{n-1+2\gamma}}.
		\end{split}
	\end{equation*}
	This is the stated estimate.
\end{proof}

We can now give the proof  of the decay estimates for our comparison problem.

\begin{proof}[Proof of Proposition \ref{decay-comparison}]
Let $\rho$ be such that $B_\rho(x_0')\subset B_{R/4}$. We shall again neglect the actual center of the ball for notational convenience, that is, $B_{r} = B_{r}(x_0')$ for any $r\le \rho$ in the following. Let $w$ be as in Lemma \ref{decay-w}, then we have that the function $\partial_ i v - w$ is a weak solution of the following problem: 
	\begin{equation*}
		\begin{cases}
			-\Div(A_0D(\partial_i v - w))=-\Div\left((A_0-A(Dv))D\partial_i v\right) &\mbox{in } B_\rho,\\
			\partial_ i v - w=0 & \mbox{on }\partial B_\rho.
		\end{cases}
		\end{equation*}

	We now test the above equation with $\partial_i v - w$ itself, to get {first via the ellipticity of $A_0$ from \eqref{13}
	\begin{equation*}
	 \mu \int_{B_\rho }|D(\partial_i v - w)|^2 \dx' \leq \frac{1}{2\mu} \int_{B_{\rho} }|A_0-A(Dv)|^2 |D\partial_i v|^2\dx' + \frac{\mu}{2} \int_{B_\rho }|D(\partial_i v - w)|^2 \dx' 
	\end{equation*}
    and then}
	\begin{equation}\label{v-w}
		\int_{B_\rho }|D(\partial_i v - w)|^2 \dx' \lesssim \int_{B_{\rho} }|A_0-A(Dv)|^2 |D\partial_i v|^2\dx' \lesssim \left(\frac{\rho}{R}\right)^{2\omega}\int_{B_{\rho} } |D\partial_i v|^2\dx',
	\end{equation}
where we have used that $A$ is Lipschitz and $Dv$ is $\omega$-H\"older continuous with $[Dv]_{C^{0,\omega}}\lesssim R^{-\omega}$ by Lemma \ref{regu-v}.
Thus, we have 
\begin{equation*}
	\begin{split}
		\int_{B_r}|D\partial_i v|^2\dx' &\lesssim \int_{B_r} |Dw|^2 \dx'+ \int_{B_r}|D(\partial_i v - w)|^2\dx' \\
		&\lesssim \left(\frac{r}{\rho}\right)^{n-1} \int_{B_\rho} |Dw|^2\dx' + \int_{B_\rho}|D(\partial_i v - w)|^2 \dx'+ {R^{2(\sigma-1)}\rho^{n-1+2\gamma}}\\
		& \lesssim \left(\frac{r}{\rho}\right)^{n-1} \int_{B_\rho} |D\partial_i v|^2\dx' + \int_{B_\rho}|D(\partial_i v - w)|^2 \dx'+ {R^{2(\sigma-1)}\rho^{n-1+2\gamma}}\\
		& \lesssim\left( \left(\frac{r}{\rho}\right)^{n-1}+\left(\frac{\rho}{R}\right)^{2\omega}\right) \int_{B_\rho} |D\partial_i v|^2 \dx'+ {R^{2(\sigma-1)}\rho^{n-1+2\gamma}},
	\end{split}
\end{equation*}
where we have used Lemma~\ref{decay-w} for the second inequality{, added $\pm D \partial_i v$ for the third,} and exploited estimate~\eqref{v-w} for the last one.
Applying now the Poincar\'e inequality on the left-hand side, we deduce
\begin{equation}
	\begin{split}
	\int_{B_r}|\partial_i v- (\partial_i v)_r|^2\dx'& \lesssim r^2\int_{B_r}|D\partial_i v|^2\dx' \\
		&\lesssim r^2\left( \left(\frac{r}{\rho}\right)^{n-1}+\left(\frac{\rho}{R}\right)^{2\omega}\right) \int_{B_\rho} |D\partial_i v|^2\dx' + { r^2 R^{2(\sigma-1)}\rho^{n-1+2\gamma}}\\
	  &\lesssim r^2\left( \left(\frac{r}{\rho}\right)^{n-1}+\left(\frac{\rho}{R}\right)^{2\omega}\right) \int_{B_\rho} |D\partial_i v|^2 + {  \left(\frac{\rho}{R}\right)^{2(1-\sigma)}\rho^{n-1+2(\gamma+\sigma)}}.	\label{osc-v}
	\end{split}
\end{equation}
We now use a Caccioppoli-type estimate for the equation satisfied by $\partial_i v$ in order to replace the quantity $|D \partial_i v|^2$ on the right-hand side of the previous inequality by the oscillation $|\partial_i v -(\partial_i v)_\rho|^2$. We derive such a Caccioppoli estimate in a standard way, by testing the equation 
$$-\text{div}(A(Dv)D\partial_i v)= - \lambda \partial_i (f-f_0)$$
with $\eta^2(\partial_i v - (\partial_i v)_{2\rho})$, where $\eta \in C_0^\infty(B_{2\rho},[0,1])$ is a standard cut-off function satisfying $\eta \equiv 1$ in $B_{\rho}$ and $\|D \eta\|_{L^{\infty}}\lesssim \rho^{-1}$. 
In this way we obtain { first
\begin{multline*}
 \mu \int_{B_{2\rho}}\eta^2 |D \partial_i v|^2 \dx' \\ 
  \leq 4 \int_{B_{2\rho}} |A(Dv)|^2 |D\eta|^2 |\partial_i v - (\partial_i v)_{2\rho}|^2  \dx' + \frac{\lambda^2 }{\mu} \int_{B_{2\rho}}|f-f_0|^2\dx' + \frac{\mu}{2} \int_{B_{2\rho}}\eta^2 |D \partial_i v|^2 \dx'
\end{multline*}
and then}
\begin{equation}\label{Dv}
	\begin{aligned}
	\int_{B_\rho }|D \partial_ i v|^2 \dx' &\leq \int_{B_{2\rho}}\eta^2 |D \partial_i v|^2\dx'\\
	& \lesssim \int_{B_{2\rho}}|D\eta|^2 |\partial_i v - (\partial_i v)_{2\rho}|^2 \dx'+ \lambda^2 \int_{B_{2\rho}}|f-f_0|^2\dx'\\
	& \lesssim \rho^{-2} \int_{B_{2\rho}} |\partial_i v - (\partial_i v)_{2\rho}|^2 \dx'+ { R^{2(\sigma-1)}\rho^{n-1+2\gamma}},
	\end{aligned}
\end{equation}
where we have used an estimate that is almost identical to \eqref{nonhom} to control  the inhomogeneity. (Here, we need that $B_\rho \subset B_{R/4}$.)
Plugging \eqref{Dv} into \eqref{osc-v}, we conclude the following decay estimate for the oscillation of $\partial_ i v$:
\begin{align*}
\mel\int_{B_r}|\partial_i v-(\partial_i v)_r|^2\dx'\\
& \lesssim \left(\frac{r}{\rho} \right)^{2} \left( \left(\frac{r}{\rho}\right)^{n-1} +  \left(\frac{\rho}{R}\right)^{2\omega}\right) \int_{B_{2\rho}}|\partial_i v - (\partial_i v)_{2\rho}|^2\dx' + { \left(\frac{\rho}{R}\right)^{2(1-\sigma)}\rho^{n-1+2(\gamma+\sigma)}}\\
&\lesssim  \left( \left(\frac{r}{\rho}\right)^{n+1} +  \left(\frac{\rho}{R}\right)^{2\omega}\right) \int_{B_{2\rho}}|\partial_i v - (\partial_i v)_{2\rho}|^2 \dx'+ { \rho^{n-1+2(\gamma+\sigma)}},
\end{align*}
for any $0<r<\rho$ with $B_\rho(x_0')\subset  B_{R/4}$. Observe that the previous estimate extends trivially to $r \in (\rho, 2 \rho)$ thanks to the monotonicity of the oscillation discussed right after Proposition~\ref{lemma_Campanato}. Hence, the estimate is valid  for any $r<2\rho$.

For $\varepsilon >0$ given as in Lemma~\ref{lemma_iteration}, let now $ \rho_0$ be such that  $B_{\rho_0}(x_0')\subset B_{R/4}$ and satisfying
$$\left(\frac{ \rho_0}{R}\right)^{2\omega}\le \varepsilon .
$$
It follows that 
\begin{equation}
		\int_{B_r}|\partial_i v-(\partial_i v)_r|^2 \dx' \lesssim
		  \left( \left(\frac{r}{\rho}\right)^{n+1} +  \varepsilon\right) \int_{B_{\rho}}|\partial_i v - (\partial_i v)_{\rho}|^2\dx' + {  \rho^{n-1+2(\gamma+\sigma)}},
\end{equation}
holds for any $r<\rho\le \rho_0$.
Finally, we can apply the iteration Lemma~\ref{lemma_iteration}, to deduce the desired decay estimate for $\partial_i v$ and conclude the proof of the proposition. 
	\end{proof}

\section{Proof of the regularity result}
\label{Section_regularity_proof}
We are now ready to prove our main result. The idea consists in transferring the oscillation decay estimate from Proposition~\ref{decay-comparison} for the comparison function $v$ to our minimizer $u$, by making use of the error estimate from  Lemma \ref{lambda-1}.

\begin{proof}[Proof of Theorem \ref{thm_main}]
As already discussed in the introduction, the optimal bound on the dimension of the singular set follows by the classical regularity theory for $\omega$-minimal sets established in \cite{Tam}. Indeed, in \cite{CP1,PS}, it was proved that, under our assumption, any isoperimetric set is $\omega$-minimal with $\omega(r)=r^{2\sigma}$, being $\sigma=\alpha/(2n(1-\alpha)+2\alpha)$. 
It remains, thus, to show the optimal regularity of the reduced boundary.
		
Let $u$ be the local representation of $\partial^* E$, i.e., the solution of our weighted minimization problem \eqref{3} under the weighted volume constraint \eqref{4}, which initially enjoys the regularity $u \in C^{1,\sigma}(B_R)$ with $\sigma=\alpha/(2n(1-\alpha) + 2\alpha)$ according to Theorem~\ref{prop_densities_low_reg2}, and let $v$ be the comparison function studied in the previous section.
Using {the minimality~\eqref{eq_minimality_mean_value} of mean values,} Proposition \ref{decay-comparison}, and Corollary \ref{Du-Dv}, we deduce that 
\begin{equation}\label{transfer}
	\begin{aligned}
	\mel	\int_{B_r}|\partial_i u -(\partial_i u)_r|^2 \dx'\\
	& {\leq \int_{B_r}|\partial_i u -(\partial_i v)_r|^2 \dx'}\\
		& \lesssim \int_{B_r}|\partial_i v -(\partial_i v)_r|^2\dx' + \int_{B_R}|Du - Dv|^2\dx'\\
	&\lesssim  \left(\frac{r}{\rho} \right)^{n-1+2(\gamma+\sigma)}\int_{B_\rho}|\partial_i v -(\partial_i v)_\rho|^2\dx' + r^{n-1+2(\gamma+\sigma)}+R^{n-1+\frac{2\alpha}{2-\alpha}} + R^{n-1+\frac{2}{1-\gamma}(\gamma+\sigma)},
	\end{aligned}
	\end{equation}
 for any $0<r<\rho\le \rho_0=\eps_0 R$. 
 

 Employing once again the minimality~\eqref{eq_minimality_mean_value} of mean values combined with Corollary \ref{Du-Dv}, we can pass on the right-hand side to $\partial_i u$ instead of $\partial_i v$, and with $r\le R\le 1$, this implies the following oscillation decay for $\partial_i u$:
\begin{equation}
	\int_{B_r}|\partial_i u -(\partial_i u)_r|^2 \dx'\lesssim \left(\frac{r}{\rho} \right)^{n-1+2(\gamma+\sigma)}\int_{B_\rho}|\partial_i u -(\partial_i u)_\rho|^2 \dx' + R^{n-1+2\min\{\gamma+\sigma,\frac{\alpha}{2-\alpha}\}} ,
\end{equation} 
for any $0<r<\rho \le \rho_0$.
We choose now $\rho= \rho_0 =\eps_0 R$, 
and deduce that
\begin{equation}
	\begin{aligned}
	\mel	\int_{B_r}|\partial_i u -(\partial_i u)_r|^2\dx'\\
	 &\lesssim \left(\frac{r}{R} \right)^{n-1+2(\gamma+\sigma)}\int_{B_{\eps_0 R}}|\partial_i u -(\partial_i u)_{\eps_0 R}|^2 \dx'+ R^{n-1+2\min\{\gamma+\sigma,\frac{\alpha}{2-\alpha}\}}\\
	&\lesssim \left(\frac{r}{R} \right)^{n-1+2(\gamma+\sigma)}\int_{B_{R}}|\partial_i u -(\partial_i u)_{R}|^2 \dx'+ R^{n-1+2\min\{\omega,\frac{\alpha}{2-\alpha}\}},
		\end{aligned}
\end{equation} 
for any $0<r\le \eps_0 R$, where $\omega = \gamma+\sigma-\delta>\sigma$ for any $\delta\in(0,1)$ small. We may choose $\omega = \frac{\gamma}2+\sigma$. The same estimate trivially extends to $r \in (\eps_0 R, R)$ and thus holds for any $0<r<R$.

We can now apply again the iteration Lemma~\ref{lemma_iteration}, to get
\begin{equation*}
	\int_{B_r(x_0')}|\partial_i u - (\partial_i u)_r|^2 \dx' \lesssim \left(\frac{r}{R}\right)^{n-1+2\min\{\omega,\frac{\alpha}{2-\alpha}\}}\int_{B_R}|\partial_i u - (\partial_i u)_{{R}}|^2 \dx' + r^{n-1+2\min\{\omega,\frac{\alpha}{2-\alpha}\}}.
\end{equation*}
Finally, by Proposition~\ref{lemma_Campanato}, we deduce that $u\in {{C^{1,\min\{\frac{\alpha}{2-\alpha},\omega \} }}}$. If $\omega  = \frac{\gamma}2+\sigma \ge \frac{\alpha}{2-\alpha}$ the proof is completed. Otherwise we can iterate the above reasoning: setting $\sigma_j \coloneqq \sigma + \frac{j\,\gamma}2$, we can iteratively apply Proposition \ref{decay-comparison} and Corollary \ref{Du-Dv}, with $u\in C^{1, \sigma_j}$ and plug the new improved estimate \eqref{opt} (with $\sigma_j>\sigma$ in place of $\sigma$) into \eqref{transfer}. After a finite number $N$ of steps (in particular when $N\gamma/2 + \sigma \ge \frac{\alpha}{2-\alpha}$) we reach the exponent $\frac{\alpha}{2-\alpha}$.  This concludes the proof of Theorem \ref{thm_main}.
\end{proof}


	\section*{Acknowledgements}

EC gratefully acknowledges the kind hospitality of the Universit\"at Augsburg. Her work is partially supported by the Gruppo Nazionale per l’Analisi Matematica, la Probabilit\`a e le loro Applicazioni (GNAMPA) of the Istituto Nazionale di Alta Matematica (INdAM), and by the Spanish grant PID2021-123903NB-I00 funded by MCIN/AEI/10.13039/501100011033 and by ERDF "A way of making Europe".

CS gratefully acknowledges the kind hospitality of the Universit\`a di Bologna. His work is funded by the Deutsche Forschungsgemeinschaft (DFG, German Research Foundation) under Germany's Excellence Strategy EXC 2044--390685587, Mathematics M\"unster: Dynamics Geometry Structure.

\end{document}